\newcommand{\restr}[2]{{{#1}}_{{\left. \right|}_{#2}}}
\newcommand{\topored}[1]{\left| #1 \right|}
\newcommand{\p}[1]{|#1|}
\newcommand{\red}[1]{\tilde #1 }
\newcommand{\ev}{\mathrm{ev}}
\newtheorem{thm}{Theorem}[section]
\newtheorem{theorem}[thm]{Theorem}
\newtheorem{corollary}[thm]{Corollary}
\newtheorem{lemma}[thm]{Lemma}
\newtheorem{proposition}[thm]{Proposition}
\theoremstyle{definition}
\newtheorem{definition}[thm]{Definition}
\newtheorem{example}[thm]{Example}
\newtheorem{observation}[thm]{Observation}
\newtheorem{remark}[thm]{Remark}
\begin{document}

\newcommand{\id}{\relax{\rm 1\kern-.28em 1}}
\newcommand{\R}{\mathbb{R}}
\newcommand{\C}{\mathbb{C}}
\newcommand{\Z}{\mathbb{Z}}
\newcommand{\Q}{\mathbb{Q}}
\newcommand{\g}{\mathfrak{G}}
\newcommand{\e}{\epsilon}

\newcommand{\bp}{\mathbf{p}}
\newcommand{\bmax}{\mathbf{m}}
\newcommand{\bT}{\mathbf{T}}
\newcommand{\bU}{\mathbf{U}}
\newcommand{\bP}{\mathbf{P}}
\newcommand{\bA}{\mathbf{A}}
\newcommand{\bm}{\mathbf{m}}
\newcommand{\bIP}{\mathbf{I_P}}

\newcommand{\cJ}{\mathcal{J}}
\newcommand{\cF}{\mathcal{F}}
\newcommand{\cP}{\mathcal{P}}
\newcommand{\ep}{\mathcal{E}}
\newcommand{\cU}{\mathcal{U}}
\newcommand{\cH}{\mathcal{H}}
\newcommand{\cK}{\mathcal{K}}
\newcommand{\cO}{\mathcal{O}}
\newcommand{\cl}{\ell}
\newcommand{\cFG}{\mathcal{F}_{\mathrm{G}}}
\newcommand{\cHG}{\mathcal{H}_{\mathrm{G}}} 
\newcommand{\cD}{{\mathcal D}}

\newcommand{\lft}{\left(} 
\newcommand{\rgt}{\right)} 
\newcommand{\wt}{\widetilde}
\newcommand{\lfq}{\left[} 
\newcommand{\rgq}{\right]} 
\newcommand{\lfg}{\left\{} 
\newcommand{\rgg}{\right\}} 
\newcommand{\idshf}{\mathrm{id}}

\newcommand{\rGL}{\mathrm{GL}}
\newcommand{\rSU}{\mathrm{SU}}
\newcommand{\rSL}{\mathrm{SL}}
\newcommand{\rSO}{\mathrm{SO}}
\newcommand{\rOSp}{\mathrm{OSp}}
\newcommand{\rsl}{\mathrm{sl}}
\newcommand{\rM}{\mathrm{M}}
\newcommand{\M}{\mathrm{M}}
\newcommand{\End}{\mathrm{End}}
\newcommand{\Hom}{\mathrm{Hom}}
\newcommand{\HOM}{\mathrm{Hom}}
\newcommand{\diag}{\mathrm{diag}}
\newcommand{\rspan}{\mathrm{span}}
\newcommand{\rank}{\mathrm{rank}}
\newcommand{\Gr}{\mathrm{Gr}}
\newcommand{\ber}{\mathrm{Ber}}

\newcommand{\fsl}{\mathfrak{sl}}
\newcommand{\fg}{\mathfrak{g}}
\newcommand{\fh}{\mathfrak{h}}
\newcommand{\ff}{\mathfrak{f}}
\newcommand{\fgl}{\mathfrak{gl}}
\newcommand{\fosp}{\mathfrak{osp}}
\newcommand{\fm}{\mathfrak{m}}

\newcommand{\str}{\mathrm{str}}
\newcommand{\Sym}{\mathrm{Sym}}
\newcommand{\tr}{\mathrm{tr}}
\newcommand{\defi}{\mathrm{def}}
\newcommand{\Ber}{\mathrm{Ber}}
\newcommand{\spec}{\mathrm{Spec}}
\newcommand{\sschemes}{\mathrm{(sschemes)}}
\newcommand{\sschemeaff}{\mathrm{ {( {sschemes}_{\mathrm{aff}} )} }}
\newcommand{\sets}{\mathrm{(sets)}}
\newcommand{\supgrp}{\mathrm{(sgrps)}}
\newcommand{\shcps}{\mathrm{(shcps)}}
\newcommand{\Top}{\mathrm{Top}}
\newcommand{\sarf}{ \mathrm{ {( {salg}_{rf} )} }}
\newcommand{\arf}{\mathrm{ {( {alg}_{rf} )} }}
\newcommand{\odd}{\mathrm{odd}}
\newcommand{\alg}{\mathrm{(alg)}}
\newcommand{\sa}{\mathrm{(salg)}}
\newcommand{\SA}{\mathrm{(salg)}}
\newcommand{\salg}{\mathrm{(salg)}}
\newcommand{\varaff}{ \mathrm{ {( {var}_{\mathrm{aff}} )} } }
\newcommand{\svaraff}{\mathrm{ {( {svar}_{\mathrm{aff}} )}  }}
\newcommand{\ad}{\mathrm{ad}}
\newcommand{\Ad}{\mathrm{Ad}}
\newcommand{\pol}{\mathrm{Pol}}
\newcommand{\Lie}{\mathrm{Lie}}
\newcommand{\Proj}{\mathrm{Proj}}
\newcommand{\uspec}{\underline{\mathrm{Spec}}}
\newcommand{\uHom}{\underline{\mathrm{Hom}}}
\newcommand{\uproj}{\mathrm{\underline{Proj}}}
\newcommand{\sym}{\cong}

\newcommand{\al}{\alpha}
\newcommand{\be}{\beta}
\newcommand{\lam}{\lambda}
\newcommand{\ga}{\gamma}
\newcommand{\de}{\delta}
\newcommand{\lra}{\longrightarrow}
\newcommand{\ra}{\rightarrow}
\newcommand{\ua}{\underline{a}}
\newcommand{\coloneqq}{:}
\newcommand{\VEC}{\mathrm{Vec}}
\newcommand{\op}{\mathrm{op}}
\newcommand{\MAPSTO}{\mapsto}
\newcommand{\cinfty}{C^\infty}
\newcommand{\hotimes}{\hat \otimes}


\medskip

    \centerline{\LARGE \bf  Super Distributions, Analytic and Algebraic}

\bigskip

\centerline{\LARGE \bf Super Harish-Chandra pairs }

\medskip

\centerline{  C. Carmeli$^\natural$, R. Fioresi$^\flat$}

\bigskip

\centerline{\it $^\natural$Dipartimento di Fisica, 
Universit\`a di Genova, and INFN, sezione di Genova}
\centerline{\it Via Dodecaneso 33, 16146 Genova, Italy}
\centerline{{\footnotesize e-mail: claudio.carmeli@gmail.com}}

\bigskip

\centerline{\it $^\flat$ Dipartimento di Matematica, Universit\`a di
Bologna }
 \centerline{\it Piazza di Porta S. Donato, 5.}
 \centerline{\it 40126 Bologna. Italy.}
\centerline{{\footnotesize e-mail: fioresi@dm.UniBo.it}}

\section{Introduction}

The purpose of this paper is to extend the theory of Super Harish-Chandra
pairs, originally developed by Koszul for Lie supergroups \cite{koszul},
to analytic and algebraic supergroups, in order to obtain information also
about their representations. Along the same lines, we also want
to define the distribution superalgebra
for algebraic and analytic supergroups and
study its relation with the universal
enveloping superalgebra in analogy with Kostant's treatment for the
differential category \cite{Kostant}.

\medskip

Our intention is to provide different but equivalent approaches
to the study of analytic and algebraic supergroups and their actions 
over fields of characteristic zero.

\medskip

We realize that in both cases the theory is very similar to 
the differential one, however given some crucial differences between
the smooth category and the analytic and algebraic one,
we believe the present work is justified, given the importance 
of this theory for practical purposes together
with the lack of an appropriate and complete available
reference, though we are aware that a good step towards a
complete clarification of
these issues, for the analytic setting only,
appears in the papers \cite{vi1}, \cite{vi2}.

\bigskip

This paper was put on the web on June 2011. Since then Masuoka has published a
more general result and in his paper \cite{ma}
has quoted our work. Since our methods
are somewhat different from Masuoka's we hope 
our work deserves a place in the
literature.

\bigskip

This paper is organized as follows.

\medskip

In Section \ref{distr} we describe the superalgebra of distributions
of an analytic or an algebraic supergroup, establishing its relation with the
universal enveloping superalgebra.

\medskip

In Section \ref{shcp} we establish the equivalence between the
category of analytic or algebraic super Harish-Chandra pairs and the category
of analytic or affine algebraic supergroups under suitable hypothesis
for the ground field.

\medskip

In Section \ref{actions} we provide an equivalent
approach to the study of the actions of supergroups, via
the super Harish-Chandra pairs (SHCP).

\medskip

For all the definitions and main results in supergeometry
expressed with our notation,
we refer the reader to \cite{fg1} ch. 2 or
\cite{ccf} ch. 1, 4, 10. In particular we
shall employ both the sheaf theoretic and the functor of
points approach to supergeometry. On this we invite the
reader to consult the classical references \cite{DM}, \cite{ma},
\cite{vsv}.

\bigskip

{\bf Acknoledgements.} We wish to thank prof. Varadarajan, for
suggesting the problem and  prof. Cassinelli and prof.
Gavarini for helpful discussions.

\section{The superalgebra of distributions} \label{distr}

We start giving the definition of distribution and
distribution superalgebra. 
Our treatment is general enough to accommodate
the two very different categories of supermanifolds
and superschemes.
For the classical definitions we send the reader
to \cite{ja} pg 95, \cite{dg} II \S 4, no. 6 and \cite{dieu}.
For the basic definitions of supergeometry we refer the reader
to \cite{ma}, \cite{vsv}, \cite{DM}, \cite{fg1}.        

\subsection{Distributions}

Let $k$ be the ground field. 

\medskip

Let $X=(|X|,\cO_X)$ be an analytic supermanifold
or an algebraic superscheme over 
the field $k$.\footnote{If $X$ is an analytic supermanifold, $k=\R$ or
$k=\C$ or even $k=\Q_p$, the $p$-adic numbers (see for example
\cite{se}). If $X$ is a superscheme, $k$
is a generic field.}

\medskip

Let $X(k)$ be the $k$-points of $X$, 
that is $X(k)=\Hom(k^{0|0},X)$ in the functor of points notation. 
For an analytic supermanifold $X$ we have that its $k$-points $X(k)$
are identified with the topological points $|X|$, while for $X$ a superscheme
the $k$-points, are in one to one correspondence with the rational points,
that is, the points $x \in |X|$ for which $\cO_{X,x}/m_{X,x} \cong k$,
$m_{X,x}$ being the maximal ideal in the stalk $\cO_{X,x}$.

\begin{definition} \label{defdistr}
A \textit{distribution supported at $x \in X(k)$ of order at most n} is a morphism
$\phi: \cO_{X,x} \lra k$, with $m_{X,x}^{n+1} \subset ker(\phi)$ for some
$n$. 
The set of all distributions at $x$ of order $n$ is denoted as $D_n(X,x)$,  
while $D(X,x)$ denotes all distributions supported at $x$.
Both $D_n(X,x)$ and $D(X,x)$
have a natural super vector space structure.

We also define:
$$
D(X)=\cup_{x \in X(k)} D(X,x).
$$
as the \textit{distributions of finite order} of $X$. Also $D(X)$ has a natural
super vector space structure.
\end{definition}

\begin{observation} \label{obsdistr}
\begin{enumerate}

\item Notice that most immediately:
$$
D_n(X,x) \cong (\cO_{X,x}/m_{X,x}^{n+1})^*
$$
since if $\phi \in D_n(X,x)$, $\phi(m_{X,x}^{n+1})=0$, hence it factors and
becomes an element in $(\cO_{X,x}/m_{X,x}^{n+1})^*$.
Notice furtherly that:
$$
D_0(X,x)=k, \qquad D_1(X, x)=k \oplus (m_{X,x} \, \big/ \, m_{X,x}^2)^*.
$$
Hence $D_1(X,x)^+:=(m_{X,x} \, \big/ \, m_{X,x}^2)^*$
becomes identified
with the tangent space to $X$ at the point $x$.

\item If $X$ is an affine algebraic superscheme, 
$\cO(X)$ the superalgebra
of the global sections of its structural sheaf, a distribution 
supported at $x$ of order $n$
can be equivalently seen as a morphism 
$\phi: \cO(X) \lra k$, with $m_{x}^n \subset ker(\phi)$, where
$m_x:=\{\phi \in \cO(X) \, | \, \phi(x)=0\}$ 
is the maximal ideal of all the functions vanishing at $x$,
where as usual in supergeometry $f(x)$ simply means the 
image in $\cO_{X,x}/m_{X,x}$ of the element $f \in \cO(X)$ 
under the natural morphisms: $\cO(X) \lra \cO_{X,x} \lra 
\cO_{X,x}/m_{X,x} \cong k$. (Notice that
since $x$ is rational, we have $\cO(X)=k \oplus m_x$ and
$\cO_{X,x}/m_{X,x} \cong k$). 

We leave to the reader to check that the two given definitions
of distributions are essentially the same in this case.

\medskip

\item If $X$ is a smooth supermanifold, that is, if we are in the differential
category, we can view a point supported distribution as
a morphism $\phi:\cO(X) \lra \R$, $J_{x}^n \subset ker(\phi)$,
where $J_{x}$ is the maximal ideal corresponding to the point $x \in |X|$
(see \cite{Kostant} and
\cite{ccf} 4.7), thus recovering the same definition as in $(2)$
for the affine algebraic category.
This is one of the many analogies between the category of affine
supervarieties and smooth supermanifolds.
\end{enumerate}

\end{observation}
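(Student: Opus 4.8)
The plan is to treat the three parts in turn; each reduces to an elementary statement about the local superalgebra $\cO_{X,x}$ and its filtration by the powers of the maximal ideal $m_{X,x}$, combined with the universal property of quotients in the category of super vector spaces.

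For part (1), a $k$-linear map $\phi\colon\cO_{X,x}\lra k$ (of either parity, as $D(X,x)$ carries a super vector space structure) annihilates $m_{X,x}^{n+1}$ exactly when it factors, uniquely, through $\cO_{X,x}/m_{X,x}^{n+1}$; this factorization identifies $D_n(X,x)$ with $(\cO_{X,x}/m_{X,x}^{n+1})^{*}$ as super vector spaces, the even and odd parts corresponding respectively to the parity-preserving and the parity-reversing functionals. Taking $n=0$ gives $D_0(X,x)\cong(\cO_{X,x}/m_{X,x})^{*}\cong k^{*}\cong k$. For $D_1(X,x)$ I would use that $x$ is rational: the structure map $k\lra\cO_{X,x}$ is a section of the augmentation $\cO_{X,x}\lra\cO_{X,x}/m_{X,x}\cong k$, whence $\cO_{X,x}/m_{X,x}^{2}\cong k\oplus(m_{X,x}/m_{X,x}^{2})$ canonically as super vector spaces; dualizing yields $D_1(X,x)\cong k\oplus(m_{X,x}/m_{X,x}^{2})^{*}$, whose second summand is, by definition, the (super) tangent space to $X$ at $x$.

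For part (2), everything comes down to the purely algebraic fact that, for an affine algebraic superscheme $X$ with rational point $x$ and corresponding graded maximal ideal $m_x\subset\cO(X)$, the natural map $\cO(X)/m_x^{N}\lra\cO_{X,x}/m_{X,x}^{N}$ is an isomorphism of $k$-superalgebras for every $N\geq 1$. Indeed its target is the localization of its source at $m_x$, but the source is already local: its maximal ideal $m_x/m_x^{N}$ is nilpotent, and every homogeneous element outside it must be even (odd elements map to $0$ in the residue field $k$, hence lie in $m_x$) with nonzero residue in $k$, and is therefore a unit; so localizing at $m_x$ changes nothing. Dualizing these isomorphisms and taking the union over $N$ identifies the two versions of $D(X,x)$ — the one built from the stalk $\cO_{X,x}$ and the one built from the global sections $\cO(X)$ — together with their super vector space structures.

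For part (3), I would transport the argument of (2) across the standard dictionary between affine algebraic supervarieties and smooth supermanifolds. The only input needed is that, in the $\cinfty$ category, the natural map $\cO(X)/J_x^{N}\lra\cO_{X,x}/m_{X,x}^{N}$ is again an isomorphism for every $N$, where $J_x\subset\cO(X)$ is the ideal of global superfunctions vanishing at $x$; granted this, parts (1) and (2) go through verbatim with $J_x$ in place of $m_x$. This isomorphism is the only genuinely non-formal point of the whole statement: surjectivity follows by multiplying a germ by a bump superfunction equal to $1$ near $x$, and injectivity amounts to showing, via iterated Hadamard's lemma, that a global superfunction whose germ at $x$ lies in $m_{X,x}^{N}$ already lies in $J_x^{N}$. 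This is exactly the content of Kostant's treatment \cite{Kostant} (see also \cite{ccf} 4.7), which I would cite rather than reprove; it is the main obstacle, and the only place where the smooth category behaves substantively differently from the analytic and algebraic ones.
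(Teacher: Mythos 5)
Your proposal is correct and follows exactly the route the paper intends: part (1) is the same factorization-through-the-quotient remark together with the splitting coming from rationality of $x$, part (3) is handled, as in the paper, by appealing to Kostant and \cite{ccf} 4.7. For part (2), which the paper explicitly leaves to the reader, your localization argument (the quotient $\cO(X)/m_x^{N}$ is already local because $m_x/m_x^{N}$ is nilpotent and odd elements lie in $m_x$, so $\cO(X)/m_x^{N}\cong\cO_{X,x}/m_{X,x}^{N}$) is precisely the standard verification being alluded to, so there is nothing to flag.
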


\begin{example} {\sl The distributions on $k^{p|q}$}. \label{exdistraffine}
($char(k)=0$).
Consider the superspace $k^{p|q}$ 
(both in the analytic and affine algebraic context). 
Let $x_1 \dots x_p, \xi_1 \dots \xi_q$
denote the global coordinates and $J_0=(x_1 \dots x_p, \xi_1 \dots \xi_q)$
the maximal ideal in the stalk $\cO_{X,0}$ at the origin.
We have that 
$$
\cO_{X,0}/J_0^{n+1} \cong {\mathrm span_k} 
\{1, \, x^{i_1}_1 \dots x^{i_p}_p\xi_1^{i_{p+1}} \dots \xi_q^{i_{p+q}}, 
\, \sum i_k=n \, \}.
$$
Let $X^I$ denote the monomial
$x^{i_1}_1 \dots x^{i_p}_p\xi_1^{i_{p+1}} \dots \xi_q^{i_{p+q}}$,
$I=(i_1 \dots i_{p+q})$.
Since the distributions at $0$ of order $n$
are the dual of the super vector space
$\cO_{X,0}/J_0^{n+1}$, we have that a basis for the super vector space of
distributions at the point $0$ is given by
$\phi_J$ 
such that $\phi_{J}(X_I)
=\delta_{IJ}$, with $I=(i_1 \dots i_{p+q})$, $J=(j_1 \dots j_{p+q})$
multiindices, $\sum i_k=\sum j_k=n$.
So we have:
$$
\phi_{j_1 \dots j_{p+q}}(f)={1 \over j_1! \dots j_{p+q}!}
\big( {\partial \over \partial x_1} \big)^{j_1}  \dots
\big( {\partial \over \partial x_p} \big)^{j_p}  
\big( {\partial \over \partial \xi_q} \big)^{j_{p+1}} \dots  
\big( {\partial \over \partial \xi_{1}} \big)^{j_{p+q}}(f)(0).
$$

\end{example}

\subsection{The superalgebra of distributions of
an analytic supermanifold}

In this section we want to characterize the distributions
for an analytic supermanifold $M=(|M|, \cO_M)$ in the following way.
Distributions at the point $x\in |M|$ are the elements 
in $\cO_{M,x}^*$, whose kernel contains an ideal of finite 
codimension
in analogy with Kostant's treatment for
the smooth category (see \cite{Kostant}). 
Let us see this more in detail, we start with a lemma.

\begin{lemma}
Let $M=(|M|, \cO_M)$ be an analytic supermanifold, $x \in |M|$,
$I_x$ the ideal in $\cO_{M,x}$ of the sections vanishing at $x$.
For each positive integer $p$, $I_x^p$ is an ideal of finite codimension.
\end{lemma}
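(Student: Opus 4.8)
The plan is to reduce to the standard local description of the stalk and then observe that the quotient of $\cO_{M,x}$ by a power of its maximal ideal is spanned by finitely many monomials. Choose analytic coordinates $t_1,\dots,t_m,\theta_1,\dots,\theta_n$ centered at $x$; then
$$
\cO_{M,x}\ \cong\ k\{t_1,\dots,t_m\}\otimes_k\Lambda_k[\theta_1,\dots,\theta_n],
$$
where $k\{t_1,\dots,t_m\}$ is the ring of germs at the origin of $k$-analytic functions, $I_x$ corresponds to the maximal ideal generated by the $N:=m+n$ elements $t_1,\dots,t_m,\theta_1,\dots,\theta_n$, and $\cO_{M,x}/I_x\cong k$ (for analytic supermanifolds the topological points are exactly the $k$-points). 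So the assertion is equivalent to $\dim_k\!\bigl(\cO_{M,x}/I_x^{\,p}\bigr)<\infty$.

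The key point is then purely formal: because $I_x$ is generated as an ideal by the finitely many elements $u_1,\dots,u_N$, one has $I_x^{\,j}=\rspan_k\{u_{i_1}\cdots u_{i_j}\}+I_x^{\,j+1}$ for every $j\ge1$ — any element of $I_x^{\,j}$ is an $\cO_{M,x}$-linear combination of the degree-$j$ monomials in the $u_i$, and replacing each coefficient by its image in $\cO_{M,x}/I_x=k$ alters it only by an element of $I_x^{\,j+1}$. Iterating this, together with $\cO_{M,x}=k\cdot1+I_x$, yields
$$
\cO_{M,x}\ =\ \rspan_k\bigl\{\,u_{i_1}\cdots u_{i_\ell}\ :\ 0\le\ell\le p-1\,\bigr\}\ +\ I_x^{\,p}.
$$
The monomials appearing here — equivalently the supermonomials $t^{\alpha}\theta^{\beta}$ with $\beta\subseteq\{1,\dots,n\}$ and $|\alpha|+|\beta|\le p-1$ — are finite in number, so $\cO_{M,x}/I_x^{\,p}$ is a quotient of a finite-dimensional space, hence finite-dimensional, which is exactly the claimed finite codimension.

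In this argument the only input that is not bookkeeping is that the maximal ideal of $\cO_{M,x}$ is finitely generated, which comes down to the classical fact that the convergent power series ring $k\{t_1,\dots,t_m\}$ has finitely generated maximal ideal (indeed it is Noetherian); tensoring with the finite-dimensional $\Lambda_k[\theta]$ changes nothing. If one prefers a self-contained proof closer to Kostant's smooth treatment, one can instead establish the decomposition $\cO_{M,x}=(\text{finite span})+I_x^{\,p}$ by a super Hadamard lemma: peel off the Taylor polynomial of degree $\le p-1$ of a germ $f$ at $x$ and expand the remainder in the even variables by iterating $g=g(0)+\sum_i t_i\int_0^1(\partial_i g)(st)\,ds$, truncating the odd variables via $\theta_j^2=0$. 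There the delicate step — the one I would flag as the real obstacle — is checking that each integral $\int_0^1(\partial_i g)(st)\,ds$ is again an analytic germ at $x$, i.e. that a parameter integral of an analytic function over a compact interval depends analytically on the parameters, so that the whole decomposition takes place inside $\cO_{M,x}$ and not merely inside the ring of formal power series. (Note that, unlike in the smooth category, $I_x^{\,p}$ here already coincides with the ideal of germs vanishing to order $\ge p$ at $x$, since there are no nonzero flat analytic germs.)
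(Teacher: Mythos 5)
Your main argument is correct, but it takes a genuinely different route from the paper's. The paper proves the lemma by a direct Taylor computation: writing $f=\sum_I f_I\theta^I$ and expanding each holomorphic coefficient $f_I$ to order $p$ with an analytic (power-series) remainder, it exhibits every germ, modulo $I_x^p$, as a $k$-linear combination of the finitely many monomials $z^K\theta^I$ with $|K|+|I|<p$, and at the same time produces explicit monomial generators of $I_x^p$ (this finite generation is exactly what is fed into the Nakayama argument in the proposition that follows). You instead isolate the purely formal content: once one knows $\cO_{M,x}=k\cdot 1+I_x$ and that $I_x$ is generated by the $N=m+n$ coordinate germs $u_1,\dots,u_N$, the identity $I_x^{\,j}=\rspan_k\{u_{i_1}\cdots u_{i_j}\}+I_x^{\,j+1}$ and its iteration give a finite spanning set of $\cO_{M,x}/I_x^{\,p}$; the only analytic input is that a holomorphic germ vanishing at the point lies in $(t_1,\dots,t_m)$, which is just the first-order instance of the paper's Taylor step (invoking Noetherianity of $k\{t_1,\dots,t_m\}$ is more than you need). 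Your version is cleaner and more general---it works verbatim for any superspace whose stalk has finitely generated maximal ideal and residue field $k$, and it still yields finite generation of $I_x^{\,p}$ since that ideal is generated by the degree-$p$ products of the $u_i$---while the paper's computation buys the explicit monomial description of $\cO_{M,x}/I_x^{\,p}$. The ``real obstacle'' you flag (analyticity of the parameter integral in the Hadamard-type remainder) concerns only your alternative route and does not arise here: in the analytic category the degree-$p$ remainder is read off directly from the convergent power series, exactly as the paper does. Also, your closing parenthetical is slightly off: by Hadamard's lemma $I_x^{\,p}$ coincides with the germs vanishing to order $\ge p$ in the smooth category as well; the genuine smooth-versus-analytic difference is the existence of flat germs, i.e.\ $\cap_p I_x^{\,p}\neq 0$ there.
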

\begin{proof}
It follows from the Taylor expansion formula. 
In fact, every element $f$ in $\cO_{M,x}$ can be written as
$f= \sum_I f_I \theta^I$, 
where $f_I$ is an element in the classical stalk of germs of 
holomorphic functions
${\cal H}_{M,x}$. For each positive integer $q$, a germ $f_I$ 
can in turn  be written as 
\[
f_I(z)=f_I(x)+\sum_{K, |K|=1}^{q-1}( \partial_K f_I)(x) z^K+
\sum_{J , |J|=q} z^Jh_{I,J}(z)
\]
where $I$, $J$, $K$ are multiindeces.
Hence we can write 
\[
f= \sum_{I} \left(f_I(x)+\sum_{R, |R+I|<p} (\partial_R f_I)(x)z^R
\right)\theta^I+ 
\sum_{|I+R|=p}h_{I,R}(z)z^R\theta^I.
\]
From this formula, it follows that the elements in $I_x^p$ are generated 
by the monomials $\{z^K \theta^I\}_{|K+I|\leq p}$.
\end{proof}

\begin{proposition}
An ideal $J$ in $\cO_{M,x}$ has finite codimension if and only if there 
exists an integer $p>0$ such that
$I_x^p\subseteq J$.
\end{proposition}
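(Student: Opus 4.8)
The plan is to prove both implications separately, with the non-trivial direction being the "only if" part. The "if" direction is immediate from the preceding lemma: if $I_x^p \subseteq J$, then $\cO_{M,x}/J$ is a quotient of $\cO_{M,x}/I_x^p$, which the lemma tells us is finite-dimensional; hence $J$ has finite codimension.

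For the "only if" direction, suppose $J$ has finite codimension $N$ in $\cO_{M,x}$. The key observation is that the chain of subspaces
\[
\cO_{M,x} \supseteq J + I_x \supseteq J + I_x^2 \supseteq \cdots \supseteq J + I_x^n \supseteq \cdots
\]
is a descending chain of subspaces each containing $J$, so it lives inside the finite-dimensional space $\cO_{M,x}/J$. Therefore it must stabilize: there is an integer $p$ with $J + I_x^p = J + I_x^{p+1}$, i.e. $I_x^p \subseteq J + I_x^{p+1}$, hence $I_x^p \subseteq J + I_x \cdot I_x^p$. First I would deduce from this, by iterating, that $I_x^p \subseteq J + I_x^m \cdot I_x^p = J + I_x^{p+m}$ for every $m \geq 0$. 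The goal is then to conclude $I_x^p \subseteq J$ outright, and for this I would invoke Nakayama's lemma applied to the finitely generated $\cO_{M,x}$-module $M' := (J + I_x^p)/J \subseteq \cO_{M,x}/J$: the relation $I_x^p \subseteq J + I_x \cdot I_x^p$ says precisely $M' = I_x \cdot M'$ (using that $I_x^p \subseteq I_x$), and since $I_x$ is the maximal ideal of the local ring $\cO_{M,x}$, Nakayama forces $M' = 0$, i.e. $I_x^p \subseteq J$. (One needs here that $\cO_{M,x}$ is Noetherian, or at least that the relevant module is finitely generated — the lemma already guarantees $I_x^p$ is finitely generated, by the monomials $z^K\theta^I$ with $|K+I|\le p$, so this is fine; and $\cO_{M,x}/J$ being finite-dimensional, $M'$ is automatically finitely generated anyway.)

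An alternative to the Nakayama step, should one wish to avoid it, is to argue more directly: after stabilization one has $I_x^p \subseteq J + I_x^{p+m}$ for all $m$, and by the lemma $I_x^{p+m}$ is spanned by monomials $z^K\theta^I$ with $|K+I| \le p+m$; one then needs that $\bigcap_m (J + I_x^{p+m}) = J$, which follows from Krull's intersection theorem ($\bigcap_m I_x^m = 0$ in the Noetherian local ring, or directly from the convergence of Taylor expansions) combined with closedness of $J$. The Nakayama argument is cleaner, so I would present that.

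The main obstacle is purely a matter of which hypotheses one is entitled to use about $\cO_{M,x}$ — specifically that it is a Noetherian local ring so that Nakayama and finite generation of $I_x^p$ apply. The lemma proved just above already does the hands-on work (Taylor expansion) showing $I_x^p$ is finitely generated and of finite codimension, so the Noetherian-type input needed is modest; the rest is the standard stabilization-plus-Nakayama pattern.
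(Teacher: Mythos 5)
Your proposal is correct and follows essentially the same route as the paper: the ``if'' part from the preceding lemma, and for the ``only if'' part the stabilization of the chain $J+I_x^n$ inside the finite-dimensional quotient $\cO_{M,x}/J$, giving $I_x^p\subseteq J+I_x\cdot I_x^p$, followed by (super) Nakayama using the finite generation of $I_x^p$ from the lemma to conclude $I_x^p\subseteq J$. The only cosmetic difference is that the paper invokes the super version of Nakayama from Varadarajan directly on $I_x^q$ modulo $J$, whereas you phrase it via the module $(J+I_x^p)/J$; these are the same argument.
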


\begin{proof}
The ``if'' part follows from the previous lemma.
For the ``only if'' part we reason as follows. Consider the descending 
chain of ideals
$J+I^p_x\supseteq J+I^{p+1}_x$. Since $J$ has finite codimension there 
exists $q$ such that
$J+I^q_x = J+I^{q+1}_x$. From this it follows that 
$I^q_x \subseteq J+ I^q_x\cdot I_x$.  
Since, by the previous lemma, $I^q_x$ is finitely generated 
we can apply the super version of Nakayama lemma 
(see \cite{vsv}) and we get $I^q_x\subseteq J$.
\end{proof}

We have then obtained the following result, which establishes a 
parallelism with the smooth category.

\begin{theorem}
The distributions on an analytic supermanifold $M$ supported at
a point $x$ correspond to the morphisms $f: \cO_{M,x}\lra k$, whose
kernel contains an ideal of finite codimension.
\end{theorem}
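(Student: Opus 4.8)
The plan is to derive the statement directly from the preceding Proposition, after unwinding the definition of a point-supported distribution. By Definition \ref{defdistr}, a distribution supported at $x$ is a morphism (of super vector spaces) $\phi\colon\cO_{M,x}\to k$ for which $m_{M,x}^{n+1}\subseteq\ker\phi$ for some $n\ge 0$; since $x$ is a topological point, $\cO_{M,x}$ is a local superalgebra and its maximal ideal $m_{M,x}$ is exactly the ideal $I_x$ of germs vanishing at $x$. So the claim to be proved is the equivalence, for a morphism $f\colon\cO_{M,x}\to k$, between the conditions ``$I_x^{n+1}\subseteq\ker f$ for some $n$'' and ``$\ker f$ contains an ideal of finite codimension''.

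First I would dispatch the easy direction: if $\phi$ is a distribution then $I_x^{n+1}=m_{M,x}^{n+1}\subseteq\ker\phi$, and the Lemma guarantees that $I_x^{n+1}$ is an ideal of finite codimension, so $\ker\phi$ contains such an ideal. For the converse, suppose $f\colon\cO_{M,x}\to k$ is a morphism and $J\subseteq\ker f$ is an ideal of finite codimension. Then the Proposition produces an integer $p>0$ with $I_x^p\subseteq J$, whence $m_{M,x}^p=I_x^p\subseteq J\subseteq\ker f$ and $f$ is a distribution supported at $x$ of order at most $p-1$. Putting the two directions together yields the claimed correspondence, together with the bookkeeping remark that $I_x^p\subseteq\ker f$ forces the order of $f$ to be at most $p-1$.

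I do not expect a real obstacle at this point: the genuine work has already been carried out in the Lemma --- the super Taylor expansion, which exhibits the generators $\{z^K\theta^I\}_{|K+I|\le p}$ of $I_x^p$ and shows $I_x^p$ is finite-codimensional --- and in the Proposition, whose nontrivial direction rests on the stabilization of the descending chain $J+I_x^p\supseteq J+I_x^{p+1}\supseteq\cdots$ and on the super Nakayama lemma. The only subtlety worth flagging explicitly is that ``morphism'' here is to be read as a morphism of super vector spaces (a linear functional), not a superalgebra homomorphism; this is why the characterization asks that $\ker f$ \emph{contain} an ideal of finite codimension rather than be one, and it is consistent with the computation $D_0(M,x)=k$ recorded in Observation \ref{obsdistr}.
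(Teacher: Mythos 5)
Your argument is correct and is exactly the route the paper intends: the theorem is stated as an immediate consequence of the preceding Lemma (finite codimension of $I_x^p$, via the super Taylor expansion) and Proposition (an ideal has finite codimension iff it contains some $I_x^p$), and your two directions unwind that in the same way, identifying $m_{M,x}$ with $I_x$ and reading off the order bound. Nothing further is needed.
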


\subsection{The distributions of a supergroup at the identity}

We now want to restrict our attention to the distributions 
of a supergroup (analytic or algebraic) at the identity element $e \in G(k)$.

As a consequence of the observation \ref{obsdistr} 
we have that:
$$
D_1(G,e)^+ \cong (m_{G,e}/m_{G,e}^2)^* \cong T_e(G)=\Lie(G).
$$
It is only natural to expect
$D(G,e)$ to be identified with $\cU(\fg)$, with $\fg=\Lie(G)$. 
This is true, as we
shall see, provided we exert some care.

\medskip

As we remarked in the definition \ref{defdistr}
the distributions at the identity 
are a super vector space, however 
there is a natural additional
superalgebra structure that we can associate to
the super vector space of distributions, by defining the
{\sl convolution product}.

\begin{definition} \label{defconvprod}
Let $\phi$, $\psi \in D(G,e)$. We define their
\textit{convolution product} as the following morphism:
$$
(\phi \star \psi)(f)=(\phi \otimes \psi)\mu^*(f), \qquad f \in \cO_{G,e}
$$
where $\mu$ denotes the multiplication in the supergroup $G$
and $\mu^*$ the corresponding sheaf morphism.
\end{definition}

The following proposition is a straightforward check.

\begin{proposition} \label{propconvprod}
The convolution product makes $D(G,e)$ a superalgebra, its
unit being the evaluation at $e$, $ev_e:\cO_{G,e} \lra k$.
\end{proposition}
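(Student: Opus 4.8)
The plan is to verify the three superalgebra axioms — associativity, existence of a two-sided unit, and bilinearity with the correct parity behaviour — directly from Definition~\ref{defconvprod}, transporting everything to the comultiplication $\mu^*$ on the local ring $\cO_{G,e}$. First I would observe that $\mu^* \colon \cO_{G,e} \to (\cO_{G \times G})_{(e,e)}$ is a morphism of (local) superalgebras and that, by the general theory, $(\cO_{G\times G})_{(e,e)}$ receives a natural map from $\cO_{G,e} \hotimes \cO_{G,e}$; composing $\phi \otimes \psi$ (which is defined on the completed tensor product) with this map is what gives meaning to $(\phi \otimes \psi)\mu^*$. I would also note that if $\phi \in D_m(G,e)$ and $\psi \in D_n(G,e)$ then $\phi \star \psi \in D_{m+n}(G,e)$: since $\mu$ sends $(e,e)$ to $e$, we have $\mu^*(m_{G,e}) \subseteq m_{(e,e)} = m_{G,e}\otimes \cO_{G,e} + \cO_{G,e}\otimes m_{G,e}$, so $\mu^*(m_{G,e}^{m+n+1})$ lands in an ideal annihilated by $\phi\otimes\psi$; hence $\star$ really does map $D(G,e)\times D(G,e)$ into $D(G,e)$, and is moreover even for parity reasons built into $\phi\otimes\psi$.

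For associativity I would use coassociativity of $\mu^*$, i.e.\ $(\mu^* \otimes \idshf)\mu^* = (\idshf \otimes \mu^*)\mu^*$, which is the sheaf-level shadow of the group associativity $\mu\circ(\mu\times\idshf) = \mu\circ(\idshf\times\mu)$. Applying $\phi\otimes\psi\otimes\chi$ to both sides and using that $(\phi\otimes\psi)\otimes\chi = \phi\otimes(\psi\otimes\chi)$ on the triple tensor product yields $(\phi\star\psi)\star\chi = \phi\star(\psi\star\chi)$ evaluated on any $f \in \cO_{G,e}$; one subtlety is that collapsing the triple product to iterated binary products must be done compatibly with the Koszul sign rule, but since all distributions in play can be taken homogeneous the signs are bookkeeping only. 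For the unit I would invoke the counit axiom: if $u \colon \spec k \to G$ is the identity section, then $(\idshf \otimes u^*)\mu^* = \idshf = (u^* \otimes \idshf)\mu^*$ under the canonical identifications $\cO_{G,e}\otimes k \cong \cO_{G,e} \cong k \otimes \cO_{G,e}$, and $u^*$ is precisely $\ev_e$. Hence $(\ev_e \star \psi)(f) = (\ev_e \otimes \psi)\mu^*(f) = \psi((u^*\otimes\idshf)\mu^*(f)) = \psi(f)$, and symmetrically on the other side, so $\ev_e$ is a two-sided unit; note $\ev_e \in D_0(G,e)$ consistently with the grading remark above.

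Bilinearity over $k$ (and super-bilinearity) is immediate since $\phi\otimes\psi$ depends $k$-bilinearly on the pair and $\mu^*$ is fixed; there is nothing to check beyond unwinding definitions. The main obstacle — really the only point needing genuine care rather than formal manipulation — is making the expression $(\phi\otimes\psi)\mu^*$ rigorous at the level of \emph{stalks}: $\mu^*(f)$ is a germ on $G\times G$ at $(e,e)$, not literally an element of $\cO_{G,e}\otimes\cO_{G,e}$, so one must argue that because $\phi$ and $\psi$ have finite order the pairing factors through a finite-dimensional quotient $\cO_{G,e}/m_{G,e}^{N+1}$, on which the naive and completed tensor products agree; equivalently one can work with the finite jet $(\cO_{G\times G})_{(e,e)}/m_{(e,e)}^{N+1} \cong (\cO_{G,e}/m_{G,e}^{N+1})\otimes(\cO_{G,e}/m_{G,e}^{N+1})$. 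Once this identification is in place, coassociativity and the counit axiom for $\mu^*$ descend to these finite quotients and all three computations above go through verbatim, establishing that $(D(G,e),\star,\ev_e)$ is a superalgebra.
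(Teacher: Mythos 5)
Your verification is correct and follows the route the paper itself intends (the paper dismisses the statement as a straightforward check): associativity from coassociativity of $\mu^*$, the unit from the counit axiom with $u^*=\ev_e$, and the only genuine point---giving meaning to $(\phi\otimes\psi)\mu^*$ at the level of stalks---handled, as it should be, by factoring through finite-codimension quotients determined by the orders of $\phi$ and $\psi$. One small correction to your ``equivalently'' remark: $\cO_{G\times G,(e,e)}/m_{(e,e)}^{N+1}$ is a proper quotient of $(\cO_{G,e}/m_{G,e}^{N+1})\otimes(\cO_{G,e}/m_{G,e}^{N+1})$, not isomorphic to it; what you actually need (and what is true) is that $\phi\otimes\psi$, with $\phi$ of order $m$ and $\psi$ of order $n$, annihilates $m_{(e,e)}^{m+n+1}$ and hence descends to that finite jet of the product, which at the same time shows $\phi\star\psi\in D_{m+n}(G,e)$, so your argument stands as written once this is rephrased.
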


\medskip

We now want to examine the relation of $D(G,e)$ with the universal
enveloping superalgebra (uesa) of the supergroup $G$.
Since $D(G,e)$ $\supset D_1(G,e)^+$ $\cong$ $\Lie(G)$,
by the universal property of the uesa $\cU(\fg)$ we have
a superalgebra morphism $\al:\cU(\fg) \lra D(G,e)$.

\begin{observation} 
When $G$ is an algebraic supergroup and
the characteristic of $k$ is positive, as it happens
in the classical setting, $D(G,e)$ contains more than the elements
coming from $\cU(\fg)$ (refer to example \ref{exdistraffine}).
This is because the divided powers $X^m/m!$
are in $D(G,e)$ but not in 
$\cU(\fg)$. Again similarly, as in the classical situation, we have
that any morphism $\cU(\fg) \lra D(G,e)$ factors via \textit{ the
universal enveloping restricted algebra} $\cU^r(\fg)$:
$$
\cU(\fg) \lra \cU^r(\fg)=\cU(\fg)/(X^p-X^{[p]}) \lra D(G,e).
$$
where $X^{[p]}$ denotes the derivation in $\fg$ corresponding to
$p$-times the derivation $X$ (which is a derivation here, since we
are in characteristic $p$).
\end{observation}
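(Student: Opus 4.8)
The plan is to derive both assertions from the local picture of Example~\ref{exdistraffine} together with the identification of the convolution superalgebra $(D(G,e),\star)$ with the superalgebra of left-invariant differential operators on $G$ evaluated at $e$ --- an identification valid, with no change, in the analytic and in the affine algebraic settings, mirroring Kostant's treatment of the differential case~\cite{Kostant}. To $\phi\in D(G,e)$ one attaches the unique left-invariant operator $\wt\phi$ on $\cO_G$ with $\ev_e\circ\wt\phi=\phi$; left-invariance is the intertwining relation $\mu^*\circ\wt\phi=(\idshf\otimes\wt\phi)\circ\mu^*$, and a short computation from Definition~\ref{defconvprod} shows that under $\phi\mapsto\wt\phi$ convolution becomes composition of operators. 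Restricted to $\fg=\Lie(G)=D_1(G,e)^+\subset D(G,e)$ this is the usual isomorphism of $\fg$ with the left-invariant vector fields, so $\alpha(X_1\cdots X_n)=\ev_e\circ(\wt X_1\circ\cdots\circ\wt X_n)$ for $X_i\in\fg$.

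To prove the factorization, fix $X$ in the even part $\fg_{\bar 0}$ and let $\wt X$ be the associated left-invariant even derivation of $\cO_G$. Since $\mathrm{char}(k)=p$, the iterated Leibniz rule $\wt X^{\circ p}(fg)=\sum_{k=0}^{p}\binom{p}{k}\wt X^{\circ k}(f)\,\wt X^{\circ(p-k)}(g)$ collapses --- all intermediate binomials vanish in $k$ --- so $\wt X^{\circ p}$ is again a derivation; it is plainly left-invariant, hence of the form $\wt Y$ for a unique $Y\in\fg_{\bar 0}$, and $Y$ is by definition $X^{[p]}$. Consequently
\[
\alpha(X^p)=\ev_e\circ\wt X^{\circ p}=\ev_e\circ\wt{X^{[p]}}=\alpha(X^{[p]}),
\]
so $\alpha(X^p-X^{[p]})=0$ for every $X\in\fg_{\bar 0}$. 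As these elements generate a two-sided ideal, $\alpha$ factors as $\cU(\fg)\lra\cU^r(\fg)\lra D(G,e)$. (No further relation is needed on $\fg_{\bar 1}$ when $p\neq 2$, since $X^2=\tfrac12[X,X]$ already holds in $\cU(\fg)$ for odd $X$; the case $p=2$ only requires adjoining the analogous relation for $\fg_{\bar 1}$.)

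For the strict inclusion $\alpha(\cU(\fg))\subsetneq D(G,e)$, observe that $\cU^r(\fg)$ is finite-dimensional for finite-dimensional $\fg$ --- the restricted super PBW theorem gives $\dim\cU^r(\fg)=p^{\dim\fg_{\bar 0}}\,2^{\dim\fg_{\bar 1}}$ --- whereas $D(G,e)=\bigcup_n D_n(G,e)$ is infinite-dimensional whenever $G$ is positive-dimensional, because $\dim D_n(G,e)=\dim(\cO_{G,e}/m_{G,e}^{n+1})\to\infty$ by Example~\ref{exdistraffine}. Concretely, already for $G=\mathbb{G}_a$ we have $\cU^r(\fg)\cong k[X]/(X^p)$ embedding into the divided-power algebra $D(\mathbb{G}_a,e)=\bigoplus_{i\ge 0}k\,\phi_i$ via $X^i\mapsto i!\,\phi_i$, with image spanned by $\phi_0,\dots,\phi_{p-1}$; none of the divided powers $\phi_i$ with $i\ge p$ is hit, which is exactly the statement that $X^m/m!$ belongs to $D(G,e)$ but not to the image of $\cU(\fg)$.

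The step I expect to be the main obstacle is the dictionary quoted in the first paragraph: making precise, in the graded category, the correspondence between iterated convolution of point distributions at $e$ and composition of left-invariant operators, carrying along the Koszul signs implicit in $(\phi\otimes\psi)\mu^*$ and in the co-Leibniz relation, and in particular getting the sharp identity $\wt X^{\circ p}=\wt{X^{[p]}}$ rather than merely an equality of distributions of order $\le p$. One should also verify this dictionary uniformly for analytic supermanifolds and affine superschemes; this is done by passing to the completion $\widehat{\cO}_{G,e}$, or equivalently to the local model of Example~\ref{exdistraffine}, where $D(G,e)$ is literally the (topological) dual and the computation is the one carried out there. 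Granting this, the remaining arguments are formal.
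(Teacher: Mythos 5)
Your proposal is correct, and it actually supplies more than the paper does: in the text this statement is left as an \emph{observation}, justified only by analogy with the classical situation and by pointers to Example \ref{exdistraffine} and the classical references (Jantzen, Demazure--Gabriel); no proof is given. Your route is the natural adaptation of that classical argument: identify $(D(G,e),\star)$ with left-invariant operators via $\phi\mapsto(1\otimes\phi)\mu^*$ (the construction behind Propositions \ref{propconvprod} and \ref{leftinv-operators}, which is characteristic-free even though the paper states the isomorphism only after assuming $\mathrm{char}(k)=0$), observe that in characteristic $p$ the $p$-th iterate of an even left-invariant derivation is again a left-invariant derivation, hence equals $\wt{X^{[p]}}$ by the paper's very definition of $X^{[p]}$ --- so the ``sharp identity'' you worry about at the end is not an issue --- and conclude the factorization since $\alpha$ kills the generators of the ideal. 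Two small points to tidy up: (i) Example \ref{exdistraffine} is stated under $\mathrm{char}(k)=0$ (its explicit dual basis uses $1/j!$), so for the growth of $\dim(\cO_{G,e}/m_{G,e}^{n+1})$ in characteristic $p$ you should appeal instead to smoothness/regularity of $G_0$ at $e$ (the monomial count itself is characteristic-free); (ii) your hypothesis ``positive-dimensional'' must mean positive \emph{even} dimension, and some such hypothesis is genuinely needed: for height-one infinitesimal group schemes such as $\mu_p$, or for purely odd supergroups, the map from $\cU^r(\fg)$ onto $D(G,e)$ is surjective, so the strict containment asserted informally in the observation should be read generically, exactly as your $\mathbb{G}_a$ computation illustrates. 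With those caveats, your dimension count plus the $\mathbb{G}_a$ example correctly substantiates the claim that the divided powers lie in $D(G,e)$ but not in the image of $\cU(\fg)$.
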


Let $char(k)=0$.

\medskip

\begin{proposition} \label{isoDU}
The morphism $\al:\cU(\fg) \lra D(G,e)$ is an isomorphism.
\end{proposition}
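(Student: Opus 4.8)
The plan is to show that $\alpha\colon\cU(\fg)\to D(G,e)$ is both injective and surjective by working through a choice of local coordinates at the identity, exactly as in the model computation of Example~\ref{exdistraffine}. The key point is that $G$ is a supergroup, so the local model of $\cO_{G,e}$ at the identity is that of $k^{p|q}$ with $\dim\fg=(p|q)$, and the graded vector space $D(G,e)$ decomposes as $\bigoplus_n D_n(G,e)$ with $D_n(G,e)\cong(\cO_{G,e}/m_{G,e}^{n+1})^*$ by Observation~\ref{obsdistr}(1). Thus a $k$-basis of $D(G,e)$ is dual to the basis of monomials $X^I$ of $\cO_{G,e}/m_{G,e}^{n+1}$, and in particular $\dim D_n(G,e)$ equals the number of such monomials of total degree $\le n$. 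On the $\cU(\fg)$ side, the PBW theorem for Lie superalgebras (valid in characteristic zero) gives a basis of ordered monomials in a homogeneous basis $e_1,\dots,e_p,\epsilon_1,\dots,\epsilon_q$ of $\fg$, and the filtration by degree of these monomials has associated graded $\Sym(\fg)$; so the dimension of the degree-$\le n$ part of $\cU(\fg)$ matches $\dim D_n(G,e)$ term by term.

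First I would make the dimension count precise: fix linear coordinates $(x_1,\dots,x_p,\xi_1,\dots,\xi_q)$ on $\cO_{G,e}$ vanishing at $e$ whose classes span $m_{G,e}/m_{G,e}^2$, dual to a homogeneous basis of $\fg=T_e(G)$. Then $\cU(\fg)$ carries its standard filtration, $\alpha$ is filtered (it sends a product of $\le n$ elements of $\fg$ into $D_n(G,e)$ because each element of $\fg\subset D_1(G,e)$ kills $m_{G,e}^2$ and convolution adds orders), and I claim $\gr(\alpha)\colon\gr\cU(\fg)=\Sym(\fg)\to\gr D(G,e)$ is an isomorphism. Since source and target are graded of the same finite dimension in each degree, it suffices to prove surjectivity of $\gr(\alpha)$ in each degree, or equivalently that $\alpha$ itself is surjective; injectivity then follows automatically from the dimension equality (or directly from injectivity of $\gr(\alpha)$).

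The heart of the argument is therefore surjectivity, and here I would argue that $\alpha$ hits the "divided-power-free" distributions because we are in characteristic zero. Concretely, for a homogeneous basis element $e_i\in\fg$ corresponding to $\partial/\partial x_i$ at $e$, the convolution power $e_i^{\star m}$, evaluated against a function $f\in\cO_{G,e}$, equals $(\partial^m f/\partial x_i^m)(e)$ up to a nonzero scalar: this follows by expanding $\mu^*(f)$ in a Taylor series at $(e,e)$ and using that the group law near $e$ is, to first order, addition of coordinates (formal-group argument, or, in the analytic case, a genuine local computation using the Taylor formula of the Lemma above). More generally, an ordered PBW monomial in the $e_i$ and $\epsilon_j$ maps under $\alpha$ to the distribution $\phi_{m_1\dots m_{p+q}}$ of Example~\ref{exdistraffine} times $m_1!\cdots m_{p+q}!$, which is nonzero precisely because $\mathrm{char}(k)=0$, so the factorials are invertible. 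Since the $\phi_J$ form a basis of $D(G,e)$, $\alpha$ is onto.

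The main obstacle I anticipate is making the Taylor-expansion computation of $\mu^*$ clean and uniform across the analytic and algebraic settings without dragging in coordinates for the whole group — one wants only the $\infty$-jet of $\mu$ at $(e,e)$, i.e. the formal group law, and must check that the odd coordinates do not spoil the computation (signs in $\mu^*$ on products, and the fact that $\epsilon_j^{\star 2}$ need not vanish since convolution is not the exterior product). The cleanest route is to observe that $\mu^*$ induces on $\cO_{G,e}/m_{G,e}^{n+1}$ the comultiplication of a graded cocommutative coalgebra whose dual algebra is, by the formal group law, exactly the truncated enveloping algebra, and then to appeal to the classical duality between the hyperalgebra of a (super)formal group and its enveloping algebra in characteristic zero; with the PBW dimension count already in hand, matching one basis suffices to conclude. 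One should also note explicitly that the codomain $D(G,e)=\bigcup_n D_n(G,e)$ really is the increasing union used here, which is Definition~\ref{defdistr} applied at the single point $e$, so no point-support subtleties arise.
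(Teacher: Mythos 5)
Your argument is correct and is essentially the proof the paper invokes by reference: the paper's proof of Proposition~\ref{isoDU} simply cites the classical argument (\cite{vsv} ch.~I, \cite{dg} II, 6, 1.1), which is exactly the filtered PBW/Taylor-expansion duality you spell out, with characteristic zero entering through the invertibility of the factorials. The only step you use silently is that $\cO_{G,e}$ has local model $k^{p|q}$ at the identity (smoothness of supergroups in characteristic zero), a standard fact that the cited classical treatments also rely on.
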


\begin{proof}
This is done essentially in the same way as in the classical setting, 
which is detailed in \cite{vsv} ch. I for the
analytic category and \cite{dg} II, 6, 1.1
for the algebraic category. 
\end{proof}

\begin{proposition} \label{leftinv-operators}
There is an isomorphism of the superalgebra of
distributions on a supergroup $G$ 
and the superalgebra of the left invariant differential
operators on $G$.
$\cU(\fg)$ is then
isomorphic to the superalgebra of the left invariant differential
operators on $G$.
\end{proposition}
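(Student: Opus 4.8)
The plan is to exhibit the two maps between $D(G,e)$ and the algebra of left-invariant differential operators on $G$ and check they are mutually inverse superalgebra homomorphisms; the identification of $\cU(\fg)$ with the left-invariant operators then follows from Proposition \ref{isoDU}. First I would recall that a differential operator $D$ on $G$ is left-invariant if $L_g^* \circ D = D \circ L_g^*$ for every $g \in G(k)$ (equivalently $\mu^* \circ D = (\idshf \otimes D)\circ \mu^*$ at the level of sheaves, which is the form one actually uses). Given $\phi \in D(G,e)$, define $D_\phi$ on a section $f$ by $D_\phi(f) = (\idshf \otimes \phi)\mu^*(f)$, i.e. $D_\phi(f)(g) = \phi(f(g\cdot -))$; one checks $D_\phi$ is a differential operator (the order bound on $\phi$ gives an order bound on $D_\phi$, using that $\mu^*$ is a local homomorphism and that $D(G,e)$ consists of finite-order distributions) and that it is left-invariant by coassociativity of $\mu^*$. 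Conversely, given a left-invariant operator $D$, set $\phi_D = \ev_e \circ D : \cO_{G,e} \lra k$; left-invariance forces $D$ to be recovered as $D_{\phi_D}$, so the two assignments are inverse bijections.

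Next I would verify that $\phi \mapsto D_\phi$ is an algebra homomorphism for the convolution product on the source and composition on the target. This is the computation
\[
D_\phi \circ D_\psi (f) = (\idshf \otimes \psi)\mu^*\big((\idshf \otimes \phi)\mu^*(f)\big) = (\idshf \otimes \psi \otimes \phi)(\mu \otimes \idshf)^*\mu^*(f),
\]
and by coassociativity $(\mu \otimes \idshf)^*\mu^* = (\idshf \otimes \mu)^*\mu^*$, which regroups to $(\idshf \otimes (\psi \star \phi))\mu^*(f) = D_{\psi \star \phi}(f)$; one must keep track of the Koszul sign coming from commuting $\psi$ past the first tensor factor, but with the convention in Definition \ref{defconvprod} this works out, possibly after checking one is looking at $\psi\star\phi$ versus $\phi\star\psi$ and adjusting by passing to the opposite algebra or using the right-invariant version instead. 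The unit $\ev_e$ maps to the identity operator, so the map is unital. Composing with the isomorphism $\al$ of Proposition \ref{isoDU} then yields the stated isomorphism $\cU(\fg) \cong \{\text{left-invariant differential operators on } G\}$.

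The main obstacle I anticipate is purely bookkeeping rather than conceptual: making precise, in the super and the possibly-non-affine analytic setting, what ``differential operator'' and ``left-invariant'' mean at the level of the structure sheaf, and checking that $D_\phi$ genuinely lands in the sheaf of differential operators (one needs that $\mu^*$ is compatible with the local filtrations by order, and that finite-order-ness of $\phi$ propagates). In the affine algebraic case this is the standard Hopf-algebraic argument ($D(G,e)$ being the continuous dual of $\cO(G)$ acting by convolution), and in the analytic case it follows from the local model $k^{p|q}$ of Example \ref{exdistraffine} together with the characterization of finite-codimension ideals established above; so I would likely argue locally and then globalize, or simply cite \cite{vsv} and \cite{dg} as was done for Proposition \ref{isoDU}. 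A secondary point to get right is the Koszul sign conventions so that the homomorphism property holds on the nose and not up to an antiautomorphism.
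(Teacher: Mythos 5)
Your proposal is correct and follows essentially the same route as the paper, which for this proposition simply defers to the classical argument in \cite{vsv} and \cite{dg}: the standard convolution correspondence $\phi \mapsto (\idshf \otimes \phi)\mu^*$, $D \mapsto \ev_e \circ D$, with the homomorphism property coming from coassociativity, is exactly what those references contain. One small reassurance on the point you hedge about: with the paper's convention $(\phi\star\psi)(f)=(\phi\otimes\psi)\mu^*(f)$ and $D_\phi=(\idshf\otimes\phi)\mu^*$, coassociativity gives $D_\phi\circ D_\psi=D_{\phi\star\psi}$ on the nose (up to the usual Koszul signs), so no passage to the opposite algebra or to right-invariant operators is needed.
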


\begin{proof} Again this proof is the same
as in  \cite{vsv} ch. I and 
\cite{dg} II, 6, 1.1, for the classical setting. 
\end{proof}

\subsection{The distributions of
an affine algebraic supergroup}

We now want to 
restrict ourselves to the case of affine
algebraic supergroups.
As we shall see, this algebraic setting shares many similarities
with the differential one.

\medskip

Consider the module of distributions $D(G)$ (see 
observation \ref{obsdistr}):
$$
D(G)=\cup_{x \in G(k)} D(G,x) \subset \cO(G)^*.
$$

\begin{definition}
If $\phi=\sum \phi_{p_i}$ is a distribution with $\phi_{p_i} \in D(G,p_i)$
we say that $\phi$ is \textit{supported} at $\{p_i\}$.
On the whole $D(G)$ we have a well defined 
associative product, called the \textit{convolution product}:
$$
(\phi_p \star \phi_q)(f)=(\phi_p \otimes \phi_q)\mu^*(f)
$$
and its unit is $ev_e$, the evaluation at the unit element:
$ev_e(f)=f(e)$. $\mu^*$ denotes (as before) the comultiplication
in the Hopf superalgebra $\cO(G)$.
\end{definition} 

\begin{observation}
If $\phi_p$ and $\phi_q$ are two distributions supported at $p$ and $q$
respectively, then $\phi_p \star \phi_q$ is supported at $pq$. This
is a consequence of the fact: 
$$
\mu^*(m_{pq}) \subset m_p \otimes \cO(G)+\cO(G) \otimes m_q
$$
where $m_x$ is as usual the maximal ideal of the sections in $\cO(G)$ vanishing
at $x \in G(k)$. 
$m_x=m_{x,0}+ J_{O(G)}$, that is, $m_x$ is the sum of $m_{x,0}$ the ordinary
maximal ideal corresponding to the topological 
rational point $x \in G(k)$ and
the ideal $J_{\cO(G)}$ generated by the odd sections in $\cO(G)$. 
\end{observation}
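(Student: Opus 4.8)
The plan is to reduce the statement to the displayed inclusion $\mu^*(m_{pq}) \subset m_p \otimes \cO(G) + \cO(G) \otimes m_q$ and then to push it through to powers of $m_{pq}$ by a pigeonhole count. First I would prove that inclusion. Since $p$ and $q$ are rational points, $\cO(G) = k \oplus m_p = k \oplus m_q$ as super vector spaces, so $\ev_p \otimes \ev_q$ is exactly the evaluation $\ev_{(p,q)}$ at the rational point $(p,q)$ of $G \times G$, and its kernel is precisely the maximal ideal $m_{(p,q)} = m_p \otimes \cO(G) + \cO(G) \otimes m_q$. Because $\mu(p,q) = pq$ on $k$-points, that is $\ev_{(p,q)} \circ \mu^* = \ev_{pq}$, pulling back a function vanishing at $pq$ yields a function vanishing at $(p,q)$; hence $\mu^*(m_{pq}) \subset m_{(p,q)}$, which is the assertion. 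The decomposition $m_x = m_{x,0} + J_{\cO(G)}$ just records that $J_{\cO(G)}$, being generated by odd sections, is contained in the kernel of every $k$-valued evaluation (as $k$ is purely even), while $\cO(G)/J_{\cO(G)} = \cO(|G|)$ identifies $m_x / J_{\cO(G)}$ with the classical maximal ideal $m_{x,0}$ of the topological point $x$.

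Next I would handle powers. Let $\phi_p$ and $\phi_q$ annihilate $m_p^{n}$ and $m_q^{m}$ respectively (such $n,m$ exist by the definition of a distribution). Since $\mu^*$ is a homomorphism of superalgebras, for every $N$ we have $\mu^*(m_{pq}^{N}) = \big(\mu^*(m_{pq})\big)^{N} \subset \big(m_p \otimes \cO(G) + \cO(G) \otimes m_q\big)^{N}$. A spanning element of this last ideal is, up to a Koszul sign, a product $(a_1 \cdots a_{N}) \otimes (c_1 \cdots c_{N})$ in which each homogeneous elementary tensor $a_i \otimes c_i$ lies either in $m_p \otimes \cO(G)$ (so $a_i \in m_p$) or in $\cO(G) \otimes m_q$ (so $c_i \in m_q$). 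Taking $N = n + m$, pigeonhole forces at least $n$ of the $a_i$ to lie in $m_p$ or at least $m$ of the $c_i$ to lie in $m_q$: in the first case the element lies in $m_p^{n} \otimes \cO(G)$ and is killed by $\phi_p \otimes \phi_q$ because $\phi_p$ kills $m_p^{n}$; in the second it lies in $\cO(G) \otimes m_q^{m}$ and is killed because $\phi_q$ kills $m_q^{m}$. Hence $(\phi_p \star \phi_q)(m_{pq}^{n+m}) = (\phi_p \otimes \phi_q)\mu^*(m_{pq}^{n+m}) = 0$, so $\phi_p \star \phi_q$ belongs to $D(G,pq)$, i.e. is supported at $pq$. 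For a general pair $\phi = \sum_i \phi_{p_i}$, $\psi = \sum_j \psi_{q_j}$, bilinearity of the convolution gives $\phi \star \psi = \sum_{i,j} \phi_{p_i} \star \psi_{q_j}$, a finite sum of distributions supported at the points $p_i q_j$, so $\phi \star \psi$ again lies in $D(G)$.

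The argument is entirely formal and I do not expect a genuine obstacle. The one point deserving a little care is that the multiplication on the super tensor product $\cO(G) \otimes \cO(G)$ introduces Koszul signs and reorders factors; but since $m_p$ and $m_q$ are homogeneous two-sided ideals, any product of homogeneous elementary tensors in which at least $n$ of the left-hand components lie in $m_p$ necessarily lands in $m_p^{n} \otimes \cO(G)$, irrespective of the signs and of the order in which the multiplications are carried out, and symmetrically on the right-hand side, so the pigeonhole step is unaffected.
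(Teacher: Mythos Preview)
Your argument is correct and is exactly the elaboration the paper is inviting: the observation is stated without proof, and the only hint offered is the inclusion $\mu^*(m_{pq}) \subset m_p \otimes \cO(G) + \cO(G) \otimes m_q$, which you first justify via $\ev_{(p,q)} \circ \mu^* = \ev_{pq}$ and then push to powers by the pigeonhole count. The care you take with Koszul signs and the homogeneity of $m_p$, $m_q$ is appropriate and the conclusion $\phi_p \star \phi_q \in D(G,pq)$ follows as you state.
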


\medskip

\begin{lemma}
Let $\phi_g \in D(G,g)$. Then there exists 
a unique $\phi_e \in D(G,e)$
such that $\phi_e=ev_{g^{-1}} \star \phi_g$.
\end{lemma}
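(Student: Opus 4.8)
The plan is to recognize that the statement is, at bottom, the assertion that the explicitly given expression $ev_{g^{-1}}\star\phi_g$ actually defines an element of $D(G,e)$; once that is established the uniqueness is automatic, since $\phi_e$ is pinned down by the formula. So the work lies entirely in checking well-definedness and the support.

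First I would recall that $ev_{g^{-1}}$, being evaluation at the rational point $g^{-1}\in G(k)$, is a distribution of order $0$ supported at $g^{-1}$, while $\phi_g\in D(G,g)$ has some finite order $n$, i.e.\ $\phi_g(m_g^{n+1})=0$. Then I would invoke the observation preceding the lemma with $p=g^{-1}$ and $q=g$, so that $pq=e$ and $\mu^*(m_e)\subset m_{g^{-1}}\otimes\cO(G)+\cO(G)\otimes m_g$. Applying the algebra homomorphism $\mu^*$ and raising to the $(n+1)$-st power, one gets $\mu^*(m_e^{n+1})\subset m_{g^{-1}}\otimes\cO(G)+\cO(G)\otimes m_g^{n+1}$, because any product of $n+1$ factors each lying in $m_{g^{-1}}\otimes\cO(G)$ or in $\cO(G)\otimes m_g$ either has a factor in the ideal $m_{g^{-1}}\otimes\cO(G)$, or has all its factors in $\cO(G)\otimes m_g$. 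The first kind of term is killed by $ev_{g^{-1}}$ on the first tensor slot, the second by $\phi_g$ on the second slot, so $(ev_{g^{-1}}\otimes\phi_g)\mu^*$ annihilates $m_e^{n+1}$. Hence $\phi_e:=ev_{g^{-1}}\star\phi_g$ is a well-defined functional whose kernel contains $m_e^{n+1}$, i.e.\ $\phi_e\in D_n(G,e)\subset D(G,e)$. This proves existence, and uniqueness is immediate from the defining formula.

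It is natural to add the inverse relation $\phi_g=ev_g\star\phi_e$, which is the form in which the lemma will actually be used later: evaluation at rational points is multiplicative for $\star$, since $(ev_g\otimes ev_h)\mu^*(f)=f(gh)$, so $ev_g\star ev_{g^{-1}}=ev_e$, the unit of $(D(G),\star)$ by Proposition~\ref{propconvprod}; associativity of $\star$ then gives $ev_g\star\phi_e=ev_g\star ev_{g^{-1}}\star\phi_g=ev_e\star\phi_g=\phi_g$. Thus $\phi_g\mapsto\phi_e$ is a bijection $D(G,g)\to D(G,e)$ with inverse $\phi_e\mapsto ev_g\star\phi_e$.

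There is no deep obstacle here; the only point deserving care is the order-and-support bookkeeping under the comultiplication — checking that the convolution stays of finite order and remains concentrated at $e$ — which is exactly where the super version of the inclusion $\mu^*(m_e)\subset m_{g^{-1}}\otimes\cO(G)+\cO(G)\otimes m_g$ is used. I do not expect the super structure (signs, odd generators) to interfere, since the argument takes place entirely at the level of ideals.
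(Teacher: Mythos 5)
Your proof is correct and takes essentially the same route as the paper: define $\phi_e:=ev_{g^{-1}}\star\phi_g$, use the inclusion $\mu^*(m_e)\subset m_{g^{-1}}\otimes\cO(G)+\cO(G)\otimes m_g$ to conclude $\phi_e\in D(G,e)$, with uniqueness immediate from the defining formula. Your explicit order bookkeeping and the inverse relation $\phi_g=ev_g\star\phi_e$ merely spell out what the paper delegates to the preceding observation on supports and to the identity $ev_g\star ev_{g^{-1}}=ev_e$.
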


\begin{proof}
Since 
$\phi=(ev_g \star ev_{g^{-1}}) \star \phi$, define 
$\phi_e=ev_{g^{-1}} \star \phi \in D(G,e)$.
\end{proof}

\begin{proposition} \label{distr-hopf}
$D(G)$ is super Hopf algebra with comultiplication $\Delta$, counit $\eta$ and
antipode $S$ given
by:
$$
\Delta(\phi_g)(f \otimes g):=\phi_g(f\cdot g) \qquad
\eta(\phi_g)(f):=\phi_g({\rm ev}_e(f))
$$
\begin{align*}
S(\phi_g)(f) & := \phi_g(i^\ast (f)),
\end{align*}
where $i:G \lra G$ denotes the inverse morphism.
\end{proposition}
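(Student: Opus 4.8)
The plan is to verify that the structure maps $\Delta$, $\eta$, $S$ are well defined on $D(G)$ and satisfy the super Hopf algebra axioms, exploiting that $\cO(G)$ is itself a commutative Hopf superalgebra and that $D(G) \subset \cO(G)^*$ is exactly the subspace of functionals whose kernel contains an ideal of finite codimension (which is a subalgebra under convolution by the earlier discussion). The guiding principle throughout is duality: $D(G)$ should inherit its coalgebra structure from the algebra structure of $\cO(G)$, and its algebra structure (convolution) is dual to the coalgebra structure of $\cO(G)$. Concretely, for $\phi_g \in D(G,g)$ the formula $\Delta(\phi_g)(f \otimes h) = \phi_g(f \cdot h)$ uses only the multiplication $\cO(G) \otimes \cO(G) \lra \cO(G)$, and $\eta(\phi_g)(f) = \phi_g(\ev_e(f))$ — more precisely $\phi_g(f(e)\cdot 1_{\cO(G)})$ — uses the unit map $k \lra \cO(G)$ followed by the counit $\ev_e$ of $D(G)$; note $\eta(\phi_g) = \phi_g(1)\in k$ when $g = e$.

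First I would check that $\Delta(\phi_g)$, a priori an element of $(\cO(G) \otimes \cO(G))^*$, actually lands in $D(G) \otimes D(G)$. This is the one genuinely non-formal point. The key observation is that $\phi_g$ is supported at a single rational point $g$ and has finite order $n$, so $\phi_g$ factors through $\cO(G)/m_g^{n+1}$, a finite-dimensional superalgebra; hence $\Delta(\phi_g)$ factors through $(\cO(G)/m_g^{n+1}) \otimes (\cO(G)/m_g^{n+1})$ after one checks that the multiplication map sends $m_g^{n+1} \otimes \cO(G) + \cO(G) \otimes m_g^{n+1}$ into $m_g^{n+1}$ (true since $m_g$ is an ideal and multiplication is a ring map), and dualizing a finite-dimensional tensor product gives $\Delta(\phi_g) \in (\cO(G)/m_g^{n+1})^* \otimes (\cO(G)/m_g^{n+1})^* = D_n(G,g) \otimes D_n(G,g) \subset D(G) \otimes D(G)$. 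I would extend to all of $D(G)$ by additivity over the (finite) supports. Well-definedness of $S$ is easier: $i^*$ is an algebra morphism sending $m_g$ to $m_{g^{-1}}$, so $S(\phi_g) = \phi_g \circ i^* \in D(G,g^{-1})$ automatically, and $\eta$ visibly gives a scalar.

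Then the Hopf axioms — coassociativity of $\Delta$, counit property, compatibility of $\Delta$ and $\eta$ with the convolution product (i.e. $\Delta$ and $\eta$ are algebra maps), and the antipode identity $m \circ (S \otimes \idshf) \circ \Delta = \eta(-)\cdot \ev_e = m \circ (\idshf \otimes S) \circ \Delta$ — all follow by transposing the corresponding axioms for the Hopf superalgebra $\cO(G)$. For instance, coassociativity of $\Delta$ on $D(G)$ is the transpose of associativity of multiplication in $\cO(G)$; the counit property is the transpose of the unit axiom in $\cO(G)$; that $\Delta$ is multiplicative for $\star$ is the transpose of the fact that $\mu^* : \cO(G) \lra \cO(G) \otimes \cO(G)$ is an algebra morphism (equivalently, that $\cO(G)$ is a Hopf, not merely bi-, superalgebra one needs the compatibility of $\mu^*$ with the product); and the antipode identities transpose the antipode axioms for $i^*$ together with $\mu$ and the unit. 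The Koszul sign rule must be tracked in the tensor-product identifications and in the statement that $S$ is a superalgebra anti-homomorphism, but no new idea is required.

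I expect the main obstacle to be precisely the well-definedness of $\Delta$ — checking that it does not merely produce a functional on $\cO(G) \otimes \cO(G)$ but genuinely a finite-order, finitely-supported distribution on $G \times G$ so that it lies in $D(G) \otimes D(G)$ rather than in some completed tensor product. Everything after that is a routine, if sign-sensitive, dualization of the Hopf axioms of $\cO(G)$, which is why the authors call it "a straightforward check"; I would state the finite-codimension/finite-support reduction carefully and then dispatch the axioms by transposition.
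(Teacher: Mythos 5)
Your proposal is correct and matches the paper's intent: the paper's own proof is literally ``Direct check,'' and your dualization of the Hopf axioms of $\cO(G)$, together with the finite-codimension argument showing $\Delta(\phi_g)$ factors through $(\cO(G)/m_g^{n+1})^*\otimes(\cO(G)/m_g^{n+1})^*$ so that it lands in $D(G)\otimes D(G)$, is exactly the verification the authors leave to the reader (compare their Remark \ref{duality}, where the same duality picture is made explicit). No discrepancy with the paper's approach.
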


\begin{proof} Direct check. \end{proof}

Let $k|G|$ be the group algebra corresponding to the ordinary
group $G(k)$. In other words 
$$
k|G|=\left\{ \sum_{g \in G(k), \lambda_g \in k} \lambda_g g
\right\}. 
$$

\begin{proposition} \label{iso}
We have a linear isomorphism:
$$
\begin{array}{cccc}
\Psi: & D(G) & \lra & k|G| \otimes \cU(g) \\
& \phi_g & \mapsto & g \otimes \phi_e
\end{array}
$$
that endows $k|G| \otimes \cU(\fg)$ of a Hopf superalgebra structure.
This structure is induced by the natural Hopf structures on the
group algebra $k|G|$ and $\cU(\fg)$:
$$
\Delta_{k|G|}(g)=g \otimes g, \qquad
\Delta_{\cU(\fg)}(U)=U \otimes 1+1 \otimes U, \qquad g \in G(k), U \in \fg.
$$
The superalgebra structure is defined as:
$$
(g \otimes X)(h \otimes Y)=gh \otimes (h^{-1}X)Y, \quad
g \in G(k), \qquad X,Y \in \cU(\fg)
$$
with $h^{-1}X:=ev_{h^{-1}} \star X \star ev_h$, 
(by \ref{isoDU} we identify distributions at $e$ with elements
in $\cU(\fg)$).
\end{proposition}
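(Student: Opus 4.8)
The plan is to verify that $\Psi$ is a well-defined linear bijection and then transport the Hopf superalgebra structure of $D(G)$ along $\Psi$, checking that the transported structure coincides with the one written in the statement. First I would check that $\Psi$ is a linear isomorphism: an arbitrary element of $D(G)$ is a finite sum $\sum_i \phi_{g_i}$ with $\phi_{g_i}\in D(G,g_i)$ and distinct $g_i$, and the preceding lemma gives, for each $g\in G(k)$, a bijection $D(G,g)\to D(G,e)$, $\phi_g\mapsto \ev_{g^{-1}}\star\phi_g$. Since $D(G)=\bigoplus_{g\in G(k)} D(G,g)$ as a super vector space (distributions supported at distinct points are linearly independent, the supports being disjoint), and $D(G,e)\cong\cU(\fg)$ by Proposition \ref{isoDU}, the map $\Psi$ sending $\phi_g$ to $g\otimes(\ev_{g^{-1}}\star\phi_g)$ is an isomorphism onto $k|G|\otimes\cU(\fg)$. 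Here I must be slightly careful that $\phi_e$ in the statement denotes $\ev_{g^{-1}}\star\phi_g$, consistent with the lemma.

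Next I would compute the multiplication. Take $\phi_g\in D(G,g)$ and $\phi_h\in D(G,h)$; by the earlier observation $\phi_g\star\phi_h$ is supported at $gh$. Writing $\phi_g=\ev_g\star X$ and $\phi_h=\ev_h\star Y$ with $X,Y\in\cU(\fg)=D(G,e)$, associativity of $\star$ and $\ev_g\star\ev_h=\ev_{gh}$ give
$$
\phi_g\star\phi_h=\ev_g\star X\star\ev_h\star Y=\ev_{gh}\star(\ev_{h^{-1}}\star X\star\ev_h)\star Y=\ev_{gh}\star(h^{-1}X)\star Y,
$$
where $h^{-1}X:=\ev_{h^{-1}}\star X\star\ev_h$ lies in $D(G,e)$. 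Applying $\Psi$ yields $\Psi(\phi_g\star\phi_h)=gh\otimes(h^{-1}X)Y$, which is exactly the product rule claimed. One still has to observe that $X\mapsto h^{-1}X$ is an automorphism of $\cU(\fg)$ (it is conjugation by $\ev_h$, i.e.\ the $\Ad(h)$-action), so the formula does define an associative superalgebra, with unit $1\otimes 1=\ev_e$.

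For the coalgebra structure I would unwind Proposition \ref{distr-hopf}. The comultiplication there satisfies $\Delta(\phi_g)(f\otimes f')=\phi_g(ff')$; evaluating on $\ev_g\star X$ and using that $X\in D(G,e)$ satisfies $X(ff')=\sum (-1)^{\dots}X_{(1)}(f)X_{(2)}(f')$ with the primitive-type coproduct on $\cU(\fg)$, together with the multiplicativity of $\ev_g$, gives that $\Psi$ intertwines $\Delta$ with $\Delta_{k|G|}\otimes\Delta_{\cU(\fg)}$ suitably combined; similarly $\eta$ and $S$ transport to the counit and antipode built from those of $k|G|$ and $\cU(\fg)$. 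The main obstacle, and the only place real care is needed, is bookkeeping the Koszul signs when interchanging odd elements of $\cU(\fg)$ across the tensor factors in the smash-product structure, and confirming that the resulting object is genuinely the smash product $k|G|\#\cU(\fg)$ rather than an ordinary tensor product; everything else is a direct manipulation of the convolution product using associativity, the identity $\ev_g\star\ev_h=\ev_{gh}$, and Propositions \ref{isoDU} and \ref{distr-hopf}.
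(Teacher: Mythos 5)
Your proposal is correct and follows essentially the same route as the paper, whose proof is simply the ``direct check on generators'' that you carry out in detail: decompose $D(G)=\bigoplus_{g\in G(k)}D(G,g)$, use the lemma $\phi_g=\ev_g\star\phi_e$ together with $\ev_g\star\ev_h=\ev_{gh}$ and Proposition \ref{isoDU} to get the smash-product multiplication, and transport the coalgebra structure of Proposition \ref{distr-hopf} by checking it on group-likes $\ev_g$ and primitives $X\in\fg$. No gaps; your explicit verification $\ev_g\star X\star\ev_h\star Y=\ev_{gh}\star(h^{-1}.X)\star Y$ is exactly the computation the paper leaves to the reader.
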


\begin{proof} This is done with a direct check. We just point out
that it is enough to do such check just on generators.
\end{proof}

\section{Super Harish-Chandra Pairs} \label{shcp}

The theory of Super Harish-Chandra Pairs (SHCP) that we shall 
develop presently provides an equivalent way to approach
the analytic or affine algebraic supergroups.

\medskip

\subsection{Definition of SHCP}

Any time we say {\sl supergroup} we mean an analytic or 
an affine algebraic supergroup over a field $k$ of characteristic zero.

\begin{definition} \label{def:SHCP}
Suppose $(G_0,\fg)$ are respectively a group (analytic or
affine algebraic) and a super
Lie algebra. Assume that:
\begin{enumerate}
\item $\fg_0 \simeq {\rm Lie}(G_0)$,
\item  $G_0$ acts on $\fg$ and this action restricted to
$\fg_0$ is the adjoint representation of $G_0$ on
    $\Lie(G_0)$. Morever the differential of such action is
the Lie bracket. We shall denote such an action with $\Ad$ or
as $g.X$, $g \in G_0$, $X \in \fg$. 
\end{enumerate}
Then $(G_0,\fg)$ is called a \emph{super Harish-Chandra pair
(SHCP)}.

\medskip

A \textit{morphism} of SHCP is simply a pair of morphisms 
$\psi=( \psi_0,\rho^\psi )$ preserving the SHCP structure
that is:

\begin{enumerate}
\item $\psi_0 : G_0 \rightarrow H_0$ is a group morphism (in the
analytic or algebraic category);
\item $\rho^\psi:\fg \rightarrow \fh$ is a super Lie algebra morphism
\item $\psi_0$ and $\rho^\psi$ are compatible in the sense that:
\begin{eqnarray*}
\rho^\psi_{\left.\right| \fg_0} & = & 
d\psi_0\qquad \Ad(\psi_0(g))\circ\rho^\psi= \rho^\psi\circ\Ad(g)
\end{eqnarray*}
\end{enumerate}

When $G_0$ is an analytic group we shall speak of an 
\textit{analytic SHCP}, when $G_0$ is an affine algebraic group
of an \textit{algebraic SHCP}.

\end{definition}

We would like to show that the 
category of (analytic of algebraic) SHCP (denoted with $\shcps$)
is equivalent to the category of 
supergroups (analytic or algebraic), denoted with $\supgrp$. 
In order to do this we start by associating in a natural
way a supergroup to a SHCP.

\begin{definition} \label{shcpsheaf}
Let $(G_0,\fg)$ be a SHCP. The sheaf $\cO_{G_0}$
of the ordinary group $G_0$ carries  
a natural action of $\cU(\fg_0)$, since the elements
of $\cU(\fg_0)$ act on the sections in $\cO_{G_0}(U)$
as left invariant differential operators. 
We define $\cO_G(U)$ as:
$$
\cO_G(U):=\Hom_{\cU(\fg_0)}(\cU(\fg), \cO_{G_0}(U)), \qquad
U \subset_{open} G_0.
$$
\end{definition}

\begin{proposition}
The assignment $U \mapsto \cO_G(U)$ is a sheaf of superalgebras
on $G_0$, where the superalgebra structure on $\cO_G(U)$ is
given by:
$$
f_1 \cdot f_2=m_{\cO_{G_0}} \circ (f_1 \otimes f_2) \circ \Delta_{\cU(\fg)} 
$$
and the restriction morphisms $\rho_{UV}:\cO_G(U) \lra \cO_G(V)$ are
$\rho_{UV}(f):= {\widetilde \rho_{UV}} \circ f$ where
$ {\widetilde \rho_{UV}} $ are the restrictions of the ordinary sheaf
$\cO_{G_0}$.
\end{proposition}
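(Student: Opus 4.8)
The plan is to verify the sheaf axioms and the well-definedness of all structure maps by systematically unwinding Definition \ref{shcpsheaf}, using only the fact that $\cU(\fg)$ is a Hopf superalgebra, that $\cU(\fg_0)$ is a Hopf subalgebra acting on the sections of $\cO_{G_0}$ by left-invariant differential operators, and the PBW-type decomposition $\cU(\fg) \cong \cU(\fg_0) \otimes \Lambda(\fg_1)$ as a left $\cU(\fg_0)$-module, which makes $\cU(\fg)$ free (hence the $\Hom_{\cU(\fg_0)}$ is well-behaved). First I would check that $\cO_G(U)$ is a superalgebra: the product $f_1 \cdot f_2 = m_{\cO_{G_0}} \circ (f_1 \otimes f_2) \circ \Delta_{\cU(\fg)}$ is manifestly $k$-bilinear and $\Z_2$-graded once one uses that $\Delta_{\cU(\fg)}$ is a superalgebra morphism and $m_{\cO_{G_0}}$ is commutative; the only real point is $\cU(\fg_0)$-linearity of the output, i.e. that $f_1 \cdot f_2$ again lies in $\Hom_{\cU(\fg_0)}(\cU(\fg),\cO_{G_0}(U))$. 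This follows because $\Delta_{\cU(\fg)}$ restricted to $\cU(\fg_0)$ is the comultiplication of $\cU(\fg_0)$, and $\cO_{G_0}(U)$ is a $\cU(\fg_0)$-module algebra (left-invariant differential operators satisfy the Leibniz rule with respect to the coproduct); associativity and (super)commutativity then reduce to coassociativity and cocommutativity of $\Delta_{\cU(\fg)}$ together with the corresponding properties of $m_{\cO_{G_0}}$.

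Next I would treat the restriction maps. Since $\widetilde\rho_{UV}:\cO_{G_0}(U)\to\cO_{G_0}(V)$ is a morphism of $\cU(\fg_0)$-module algebras (restriction commutes with left-invariant differential operators — this is exactly where analyticity/algebraicity of $G_0$ and the local nature of these operators enter), post-composition $f\mapsto\widetilde\rho_{UV}\circ f$ sends $\cU(\fg_0)$-linear maps to $\cU(\fg_0)$-linear maps and is compatible with the product above, so $\rho_{UV}$ is a well-defined superalgebra morphism, and functoriality $\rho_{VW}\circ\rho_{UV}=\rho_{UW}$, $\rho_{UU}=\id$ is immediate from the same properties for $\widetilde\rho_{UV}$. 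Then I would verify the sheaf gluing axiom: given an open cover $U=\bigcup_i U_i$ and a compatible family $f_i\in\cO_G(U_i)$, for each $u\in\cU(\fg)$ the elements $f_i(u)\in\cO_{G_0}(U_i)$ agree on overlaps (because the $f_i$ agree after restriction), so by the sheaf property of $\cO_{G_0}$ they glue to a unique $f(u)\in\cO_{G_0}(U)$; one checks $u\mapsto f(u)$ is $k$-linear and $\cU(\fg_0)$-linear by uniqueness of the glued section, giving a unique $f\in\cO_G(U)$ restricting to the $f_i$. Separatedness is the same argument run with the trivial family.

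The main obstacle is not logical depth but rather making precise the one genuinely geometric input: that the sections of $\cO_{G_0}$ form a sheaf of $\cU(\fg_0)$-module algebras in a way that is natural in the open set, i.e. that left-invariant differential operators on the ordinary (analytic or algebraic) group $G_0$ are local operators commuting with restriction and obeying the co-Leibniz rule coming from $\Delta_{\cU(\fg_0)}$. Once this is granted — it is standard, and in the algebraic case amounts to the statement that $\cU(\fg_0)=\Dist(G_0)$ acts on $\cO_{G_0}$ compatibly with localization — everything else is a formal diagram chase. I would therefore organize the write-up as: (i) recall the $\cU(\fg_0)$-module-algebra sheaf structure on $\cO_{G_0}$; (ii) deduce the superalgebra structure on each $\cO_G(U)$ from the Hopf structure of $\cU(\fg)$; (iii) deduce the restriction morphisms; (iv) verify gluing via the sheaf property of $\cO_{G_0}$ applied pointwise in $u\in\cU(\fg)$.
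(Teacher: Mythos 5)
Your proposal is correct and follows essentially the same route as the paper, which simply declares the associativity check routine and derives the sheaf property from the fact that $\cO_{G_0}$ is an ordinary sheaf; your write-up just fills in those standard details (the $\cU(\fg_0)$-module-algebra structure on $\cO_{G_0}$, compatibility of restriction with left-invariant operators, and gluing pointwise in $u\in\cU(\fg)$).
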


\begin{proof}
The check $f_1 \cdot f_2$ is an associative product is routine, while
the sheaf property comes from the fact $\cO_{G_0}$ is an ordinary sheaf.
\end{proof}

We now show that $(G_0, \cO_G)$ is a superspace, by showing that
is {\sl globally split}, in other words that:
$$
\cO_G(U) \cong \cO_{G_0}(U) \otimes \wedge(\fg_1).
$$


\begin{theorem} \label{gamma}
\begin{enumerate}
    \item The map
    \[
        \begin{aligned}
            \widehat{\gamma} \colon \cU(\fg_0) \otimes \wedge(\fg_1) &
            \to \cU(\fg) \\
            X \otimes Y &\mapsto X \cdot \gamma(Y)
        \end{aligned}
    \]
    is an isomorphism of super left $\cU(\fg_0)$-modules, where
    \[
\begin{aligned}
            \gamma \colon \wedge(\fg_1) &\to \cU(\fg) \\
            X_1 \wedge \cdots \wedge X_p &\mapsto
            \frac{1}{p!} \sum_{\tau\in{{S}_p}} (-1)^{|\tau|} X_{\tau(1)} 
\cdots X_{\tau(p)}
        \end{aligned}
    \]
is the symmetrizer map, $|\tau|$ denotes the parity of the
permutation $\tau$.

\item
$(G_0, \cO_G)$ is globally split
i.~e. for each open subset $U\subseteq G_0$, there is an 
isomorphism of superalgebras
\begin{equation} \label{eq:global_split}
    \cO_G(U)
    \simeq \Hom \big(\wedge(\fg_1), \cO_{G_0}(U)\big)
    \simeq \cO_{G_0}(U) \otimes {\wedge(\fg_1)}^\ast.
\end{equation}
Hence $\cO_G$  carries a natural $\Z$-gradation.
\end{enumerate}
\end{theorem}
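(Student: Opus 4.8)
The plan is to prove part (1) first, then deduce part (2) as a fairly formal consequence. For part (1), the symmetrizer $\gamma$ is the standard super-analogue of the symmetrization map used in the proof of the PBW theorem. I would first record its basic property: $\gamma(X_1\wedge\cdots\wedge X_p)$ equals $X_1\cdots X_p$ modulo the filtration piece $\cU^{\leq p-1}(\fg)$, so on the associated graded $\gamma$ induces (up to the evident identification) the inclusion of $\wedge(\fg_1)$ into $S(\fg_0)\otimes\wedge(\fg_1)\cong\operatorname{gr}\cU(\fg)$ given by super-PBW. Then I would show $\widehat\gamma$ is a map of left $\cU(\fg_0)$-modules — which is immediate from the definition, since multiplication by $X\in\cU(\fg_0)$ on the left acts on the first tensor factor and $\widehat\gamma$ is $X\otimes Y\mapsto X\cdot\gamma(Y)$. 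To see it is an isomorphism, I would pass to associated graded modules with respect to the PBW filtration: $\operatorname{gr}\widehat\gamma$ becomes the multiplication map $S(\fg_0)\otimes\wedge(\fg_1)\to\operatorname{gr}\cU(\fg)$, which is an isomorphism by the super-PBW theorem. A filtered map inducing an isomorphism on associated graded is itself an isomorphism (using that the filtration on $\cU(\fg_0)\otimes\wedge(\fg_1)$ is exhaustive and bounded below), so $\widehat\gamma$ is an isomorphism of super vector spaces, hence of left $\cU(\fg_0)$-modules.

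For part (2), I would use part (1) to rewrite
\[
\cO_G(U)=\Hom_{\cU(\fg_0)}\big(\cU(\fg),\cO_{G_0}(U)\big)
\cong\Hom_{\cU(\fg_0)}\big(\cU(\fg_0)\otimes\wedge(\fg_1),\cO_{G_0}(U)\big)
\cong\Hom_k\big(\wedge(\fg_1),\cO_{G_0}(U)\big),
\]
where the last step is the adjunction (extension-of-scalars / free-module) identity $\Hom_{\cU(\fg_0)}(\cU(\fg_0)\otimes V,M)\cong\Hom_k(V,M)$ for a $k$-module $V$. Since $\wedge(\fg_1)$ is finite-dimensional, $\Hom_k(\wedge(\fg_1),\cO_{G_0}(U))\cong\cO_{G_0}(U)\otimes\wedge(\fg_1)^\ast$ as super vector spaces, which gives the displayed chain \eqref{eq:global_split} at the level of super vector spaces and exhibits the promised $\Z$-gradation, inherited from the natural grading of $\wedge(\fg_1)^\ast$.

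It then remains to check that the first isomorphism in \eqref{eq:global_split} is one of superalgebras, i.e.\ that transporting the product $f_1\cdot f_2=m_{\cO_{G_0}}\circ(f_1\otimes f_2)\circ\Delta_{\cU(\fg)}$ along $\widehat\gamma$ yields the algebra structure on $\Hom(\wedge(\fg_1),\cO_{G_0}(U))$ coming from the coalgebra structure on $\wedge(\fg_1)$ (with its standard primitive-generated comultiplication). The key input here is that $\gamma\colon\wedge(\fg_1)\to\cU(\fg)$ is a coalgebra morphism: $\Delta_{\cU(\fg)}\circ\gamma=(\gamma\otimes\gamma)\circ\Delta_{\wedge(\fg_1)}$. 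This is the classical fact that the symmetrizer intertwines the shuffle/primitive coproducts, and it is what makes the convolution products match. I would verify it on generators $X\in\fg_1$ (where both sides are $X\otimes 1+1\otimes X$) and extend. I expect this coalgebra-compatibility of $\gamma$, together with keeping track of signs in the super setting, to be the main technical obstacle; everything else is standard filtered-algebra and adjunction bookkeeping. Finally, naturality of all the identifications in $U$ (they are built from $\cU(\fg_0)$-linear maps and the ordinary restriction maps of $\cO_{G_0}$) gives that \eqref{eq:global_split} is an isomorphism of sheaves, so $(G_0,\cO_G)$ is a globally split superspace.
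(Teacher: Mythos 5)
Your proposal is correct and takes essentially the same approach as the paper: part (1) is exactly the super PBW theorem (you simply spell out the filtration argument that the paper cites), and part (2) rests on the identification $f \mapsto f \circ \gamma$ (your adjunction chain is this same map) being an algebra morphism because $\gamma$ is a supercoalgebra morphism, with bijectivity coming from $\cU(\fg_0)$-linearity and part (1), just as in the paper. One small caveat: your plan to check the coalgebra property of $\gamma$ ``on generators and extend'' is not literally available, since $\gamma$ is not an algebra map; it requires the direct classical computation with primitive elements (a fact the paper likewise invokes without proof).
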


\begin{proof} (1) is an application of Poincar\'e-Birkhoff-Witt
(PBW) theorem (see \cite{vsv}), while for (2) 
consider the following map:
\begin{align*}
\phi_U:\, \cO_G(U) & \to \Hom \big(\wedge(\fg_1),\cO_{G_0}(U)\big)\\
f & \to f\circ \gamma
\end{align*}
 Since $\gamma$ is a supercoalgebra morphism,
 $\phi_U$ is a superalgebra morphism. In fact:
$$
\phi_U(f_1 \cdot f_2)=m \circ f_1 \otimes f_2 \circ
\Delta_{\cU(\fg)} \circ \gamma=
m \circ f_1 \otimes f_2 \circ (\gamma \otimes \gamma)
\Delta_{\cU(\fg)}=\phi_U(f_1)\phi_U(f_2).
$$
The fact that $\phi_U$   is a  superalgebra 
isomorphism follows at once from $\cU(\fg_0)$-linearity.
\end{proof}

As an almost immediate consequence of the previous theorem
we have the following corollary.

\begin{corollary}
If $G_0$ is an analytic
manifold (resp. algebraic scheme), then $(G_0, \cO_{G})$
is a superspace. 
\end{corollary}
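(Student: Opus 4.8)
The plan is to reduce the corollary to Theorem~\ref{gamma}(2), which has already established that $(G_0,\cO_G)$ is globally split, i.~e. that for every open $U\subseteq G_0$ there is an isomorphism of superalgebras
\[
\cO_G(U)\simeq \cO_{G_0}(U)\otimes\wedge(\fg_1)^\ast,
\]
compatible with restrictions. Recall that to show $(G_0,\cO_G)$ is a superspace it suffices to exhibit, around each point of $G_0$, an open neighbourhood $U$ on which $(U,\restr{\cO_G}{U})$ is isomorphic to a super domain, i.~e. to $(V,\cH_V\otimes\wedge(\fg_1)^\ast)$ for $V$ an open subset of $\R^{m}$ or $\C^{m}$ (analytic case) or to the structure sheaf of an affine superscheme (algebraic case). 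So the whole content is to transport a local model for $G_0$ through the global splitting.

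First I would invoke that $G_0$ is, by hypothesis, an ordinary analytic manifold (resp.\ an algebraic scheme); hence it admits an open cover by charts $U$ with $\restr{\cO_{G_0}}{U}\cong \cH_V$ for $V$ open in $k^{m}$ (resp.\ by affine opens $U=\spec A$). Fix such a $U$. Then Theorem~\ref{gamma}(2) gives
\[
\restr{\cO_G}{U}\;\simeq\;\restr{\cO_{G_0}}{U}\otimes\wedge(\fg_1)^\ast\;\simeq\;\cH_V\otimes\wedge(\fg_1)^\ast,
\]
which is exactly the structure sheaf of the super domain $V^{m|q}$ with $q=\dim\fg_1$ (resp.\ of the affine superscheme $\spec(A\otimes\wedge(\fg_1)^\ast)$). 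Doing this over every member of the cover, and noting that the identifications are restrictions of a single global isomorphism so they automatically glue, shows $(G_0,\cO_G)$ is locally isomorphic to a super domain (resp.\ an affine superscheme), hence is a superspace. One should also remark that the underlying topological space $|G_0|$ is Hausdorff and second countable in the analytic case (resp.\ a scheme in the algebraic case) because it is literally the underlying space of $G_0$, so no extra point-set hypothesis is needed.

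The only genuine point to check is that the superalgebra structure produced by Theorem~\ref{gamma}, transported to $\cH_V\otimes\wedge(\fg_1)^\ast$, is the \emph{standard} tensor-product superalgebra structure of a super domain and not some twisted one; but this is precisely what the $\cU(\fg_0)$-linearity argument in the proof of Theorem~\ref{gamma}(2) delivers, since $\phi_U$ is shown there to be an isomorphism of superalgebras, and the target $\Hom(\wedge(\fg_1),\cO_{G_0}(U))$ carries exactly the convolution-type product that, under the coalgebra structure of $\wedge(\fg_1)$, is the standard $\cO_{G_0}(U)\otimes\wedge(\fg_1)^\ast$ product. So there is no real obstacle here: the corollary is a direct unwinding of definitions once Theorem~\ref{gamma} is in hand, and the ``main difficulty'' — constructing the splitting via the symmetrizer $\gamma$ and PBW — has already been absorbed into that theorem.
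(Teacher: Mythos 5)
Your proposal is correct and follows essentially the same route as the paper, which treats the corollary as an immediate consequence of the global splitting $\cO_G(U)\simeq\cO_{G_0}(U)\otimes\wedge(\fg_1)^\ast$ established in Theorem~\ref{gamma}(2); you merely spell out the transport of the local models of $G_0$ (charts, resp.\ affine opens) through that splitting, which the paper leaves implicit. Your additional check that the transported product is the standard one is exactly the $\cU(\fg_0)$-linearity point already contained in the proof of Theorem~\ref{gamma}, so nothing new is needed.
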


In the next sections we will complete the task of showing 
$(G_0, \cO_{G})$ is a supergroup, by providing explicit expression
for the multiplication, unit and inverse. This will lead to the main
result of the paper, namely the equivalence of categories between
the SHCP and supergroups. We now state the main result of the paper and then
we shall prove it with different methods in the next sections, since
at this points the analytic and algebraic categories diverge and
require a dramatically different treatment.

\begin{theorem} \label{eqcat}
Let $k$ be a field of characteristic zero, algebraically closed
if we are in the algebraic category.
Define the functors
$$
\begin{array}{ccc}
\cH:\, \supgrp & \to & \shcps \\
G & \to &({G_0}, \Lie(G) )\\
\phi & \to & (|\phi|, (d \phi)_e)
\\ \\
\cK:\, \shcps & \to & \supgrp \\
(G_0,\fg) & 
\to & {\bar G}:=(G_0, \HOM_{\cU(\fg_0)} \big( \cU(\fg), \cO_{G_0} \big)) \\
\psi=(\psi_0,\rho^\psi) & \to & f \mapsto \psi^*_0 \circ f \circ \rho_\psi
\end{array}
$$
where $G$ and $(G_0, \fg)$ are objects and
$\phi$, $\psi$ are morphisms of the corresponding categories
(in the definition of $\cH$, $G_0$ is the ordinary group underlying $G$).
Then $\cH$ and $\cK$ define an equivalence between the categories
of supergroups (analytic or algebraic) and super Harish-Chandra pairs
(analytic or algebraic).
\end{theorem}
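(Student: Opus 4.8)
The plan is to exhibit $\cH$ and $\cK$ as mutually quasi-inverse functors. First I would complete the construction of $\cK$: Theorem~\ref{gamma} already shows that $\bar G=(G_0,\cO_G)$ is a globally split superspace, so it remains to equip it with a supergroup structure. For this I would use the functor-of-points description: for a super $k$-algebra $A$ (or, in the analytic case, a local test object), one identifies $\bar G(A)$ with pairs $(g,\,\text{exponential-type data from }\fg_1\otimes A_1)$, or more invariantly realizes $\cO_G(U)=\Hom_{\cU(\fg_0)}(\cU(\fg),\cO_{G_0}(U))$ as ``$\cU(\fg)$-equivariant functions on $G_0$'', on which left and right translations by $G_0$ and the $\fg$-action by invariant differential operators are visibly available. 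The comultiplication $\mu^*\colon\cO_G\to\cO_G\hat\otimes\cO_G$ is then built from the comultiplication of $\cO_{G_0}$, the coproduct of $\cU(\fg)$, and the $G_0$-action $\Ad$ on $\fg$ (which is exactly the data packaged in the SHCP axioms); the counit is $\ev_e$ and the antipode comes from $i$ on $G_0$ together with the antipode of $\cU(\fg)$. One checks the Hopf axioms on generators, using condition~(2) of Definition~\ref{def:SHCP} (compatibility of $\Ad$ with the bracket) precisely where associativity/coassociativity would otherwise fail. Functoriality of $\cK$ on morphisms is the displayed formula $f\mapsto\psi_0^*\circ f\circ\rho_\psi$, and the compatibility conditions in the definition of a morphism of SHCP are exactly what make this a morphism of supergroups.

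Next I would check $\cH$ is well defined: for a supergroup $G$, $(G_0,\Lie(G))$ satisfies the SHCP axioms because $G_0$ acts on $\fg=\Lie(G)$ by the adjoint action (conjugation in the functor of points), its restriction to $\fg_0$ is the ordinary adjoint representation, and the differential of $\Ad$ is the bracket --- these are standard facts about supergroups. On morphisms, $\phi\mapsto(|\phi|,(d\phi)_e)$ manifestly respects the compatibility conditions.

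Then come the two natural isomorphisms. For $\cH\circ\cK\simeq\idshf$: starting from $(G_0,\fg)$, the underlying group of $\bar G$ is $G_0$ by construction, and $\Lie(\bar G)=T_e\bar G$ can be computed from the global splitting $\cO_G\simeq\cO_{G_0}\otimes\wedge(\fg_1)^*$ together with $D_1(\bar G,e)^+\cong(m_{\bar G,e}/m_{\bar G,e}^2)^*$ from Observation~\ref{obsdistr}; the even part recovers $\Lie(G_0)\cong\fg_0$ and the odd part recovers $\fg_1$, and one checks the bracket and the $G_0$-action agree with the original. For $\cK\circ\cH\simeq\idshf$: given a supergroup $G$, one must produce a natural isomorphism $G\xrightarrow{\sim}(G_0,\Hom_{\cU(\fg_0)}(\cU(\fg),\cO_{G_0}))$. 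Here I would use that $\cU(\fg)\cong D(G,e)$ (Proposition~\ref{isoDU}) acts on $\cO_G$ by left-invariant differential operators (Proposition~\ref{leftinv-operators}), giving a map $\cO_G(U)\to\Hom_{\cU(\fg_0)}(\cU(\fg),\cO_{G_0}(U))$, $s\mapsto(D\mapsto \text{``}Ds\text{'' restricted to }G_0)$; injectivity and surjectivity follow from the global splitting (both sides are free $\cO_{G_0}$-modules of rank $2^{\dim\fg_1}$ and the map is $\cO_{G_0}$-linear and an isomorphism on the associated graded), and one verifies it intertwines the two Hopf structures.

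The main obstacle I expect is the second natural isomorphism $\cK\circ\cH\simeq\idshf$, specifically the verification that the reconstruction map $\cO_G\to\Hom_{\cU(\fg_0)}(\cU(\fg),\cO_{G_0})$ is compatible with comultiplications --- i.e.\ that the abstractly-built coproduct on $\cK(\cH(G))$, assembled from $\Ad$ and the coproducts of $\cO_{G_0}$ and $\cU(\fg)$, really is carried to $\mu_G^*$. This is where the analytic and algebraic cases genuinely diverge: in the algebraic case one works with the Hopf superalgebra $\cO(G)$ globally and can argue with rational points and the coalgebra $D(G)$ (using Proposition~\ref{iso}), whereas in the analytic case one must work stalk-wise near $e$ and propagate by left translation over $|G|$, invoking the analytic PBW/exponential coordinates rather than purely algebraic PBW. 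Once this compatibility is in hand, naturality in $G$ and in $(G_0,\fg)$ is a routine diagram chase, and the equivalence of categories follows.
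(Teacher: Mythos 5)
Your skeleton coincides with the paper's: the same functors, the same explicit construction of the Hopf structure on $\Hom_{\cU(\fg_0)}(\cU(\fg),\cO_{G_0})$ out of $\Ad$, the coproducts of $\cO_{G_0}$ and $\cU(\fg)$ and the antipode of $\cU(\fg)$, and the same comparison morphism $\eta^*\colon s\mapsto\bigl(X\mapsto(-1)^{|X|}|D_X s|\bigr)$ for the composite $\cK\circ\cH$. Two remarks on where you diverge or are too quick. First, on the construction of $\cK$: the paper stresses that the delicate point is not the Hopf axioms ``on generators'' but the well-definedness of $\mu^*,i^*,e^*$, i.e.\ that $\mu^*(f)$ is again $\cU(\fg_0)$-equivariant in both arguments; this is proved by an exponential/one-parameter-subgroup computation, and in the algebraic category one must first give the symbol $e^{tZ}$ a meaning via dual numbers $A(t)$, $t^2=0$, before that computation transfers. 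Your plan does not flag this, but it is the part of Proposition~\ref{analsgrps}/\ref{algHopfstructure} that actually requires work.

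Second, and more substantially, your argument that $\eta^*$ is an isomorphism is genuinely different from the paper's and, as stated, has a gap. The paper proves it in the analytic case by checking $|\eta|$ and all differentials $(d\eta)_g$ are bijective and invoking the inverse function theorem, and in the algebraic case (where that tool is unavailable) by proving surjectivity of $\eta^*$ directly: embedding $G\subset\rGL(m|n)$, using the pointwise linear independence of the left-invariant odd vector fields together with PBW to solve explicitly for $s$, with injectivity coming from smoothness and \cite{fg2}, and freeness of $\cO(G)$ as an $\cO(G_0)$-module coming from Masuoka's splitting theorem. You instead assert that $\eta^*$ is ``$\cO_{G_0}$-linear and an isomorphism on the associated graded'' between free modules of rank $2^{\dim\fg_1}$. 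As written this is not quite meaningful: $\cO_G$ carries no canonical $\cO_{G_0}$-algebra structure (a splitting $\cO_G\simeq\cO_{G_0}\otimes\wedge\fg_1^*$ exists but is not canonical, and in the algebraic case its existence is exactly Masuoka's theorem, which you are then implicitly using), so the argument must be run with the filtration by powers of the ideal $J$ of odd elements, which $\eta^*$ does respect. More importantly, the claim that $\eta^*$ induces an isomorphism on $\mathrm{gr}_J$ is precisely the nontrivial content --- it amounts to showing that the left-invariant odd vector fields $D_{X_1},\dots,D_{X_n}$ trivialize the odd directions at every point, which is what the paper's coordinate computation (and its analytic inverse-function-theorem substitute) establishes. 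If you prove that graded statement, your filtration argument does give a clean uniform route covering both categories; without it, the isomorphism step is asserted rather than proved, and that is where the whole difficulty of the algebraic case sits.
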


\subsection{Analytic SHCP} \label{analytic-shcp}

Let $k=\R$ or $\C$.

\medskip

For analytic SHCP it is relatively easy to define a supergroup structure
on the superspace $(G_0, \cO_G)$ we have defined above, by
mimicking what happens in the smooth case.
In fact for an analytic ordinary group $G_0$, the action
of $\cU(\fg_0)$ on $\cO_{G_0}$ 
is given by:
$$
(\wt{D}^{}_Z \cdot f)(g)=f(ge^{tZ}), 
\qquad Z \in \fg_0, \quad f \in \cO_{G_0}(U)
$$
where $e^{tZ}$ denotes the one-parameter subgroup corresponding
 to the element $Z \in \fg_0$.

\begin{proposition} \label{analsgrps}
$(G_0,\cO_G)$ is an analytic supergroup 
where the multiplication $\mu$, inverse $i$ and unit $e$
and are defined via the corresponding sheaf morphisms 
as follows. 
\begin{align}
\big[ \mu^\ast (f) (X,Y) \big] (g,h) &= 
\big[ f \big( (h^{-1}.X) Y \big) \big](gh) \\
    \label{eq:inv}
\big[ i^\ast (f) (X) \big] (g^{-1}) &=  
\big[ f(g^{-1}.\overline{X}) \big](g) \\
e^\ast(f) &=  \big[ f(1) \big](e)
\end{align}
for $f \in \cO_G(U)$, $g,h \in |G|$, where $|G|$
is the topological space underlying $G_0$. $\overline{X}$ denotes
the antipode in $\cU(\fg)$.
\end{proposition}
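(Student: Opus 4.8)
The plan is to verify that the three sheaf morphisms $\mu^\ast,i^\ast,e^\ast$ defined by the displayed formulas are well-defined morphisms of sheaves of superalgebras and that they satisfy the Hopf-type axioms (associativity, unitality, inverse) making $(G_0,\cO_G)$ into a supergroup. The underlying topological group is just $G_0$, so all the content is in checking these structure maps on sections $\cO_G(U)$.

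\medskip

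First I would check that the formulas make sense, i.e.\ that the right-hand sides actually define elements of the appropriate Hom-spaces. For $\mu^\ast$ one must see that $(f\mapsto [(X,Y)\mapsto ((g,h)\mapsto [f((h^{-1}.X)Y)](gh))])$ lands in $\cO_{G_0\times G_0}(U\times V)\otimes\cdots$, i.e.\ that it is $\cU(\fg_0)$-linear separately in each slot after the identification of Theorem~\ref{gamma}; here the point is that $h^{-1}.X$ uses the $G_0$-action $\Ad$ on $\cU(\fg)$ from the SHCP axioms, and the $\cU(\fg_0)$-linearity in the $Y$-slot is immediate while linearity in the $X$-slot follows because $Z.X$ for $Z\in\fg_0$ is the adjoint action, whose differential is the bracket. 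The $\cU(\fg_0)$-linearity needed for $i^\ast$ and $e^\ast$ is similar and easier. One also checks these are even maps and algebra maps for the convolution-type product on $\cO_G$; since $\gamma$ is a supercoalgebra morphism (as used in the proof of Theorem~\ref{gamma}), this reduces to a short computation like the one already displayed for $\phi_U$.

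\medskip

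Next I would verify the axioms. Coassociativity $(\mu\times\idshf)\circ\mu=(\idshf\times\mu)\circ\mu$ amounts to the identity
\[
\big[f\big(((hk)^{-1}.X)\,(k^{-1}.Y)\,Z\big)\big](ghk)
= \big[f\big((k^{-1}((h^{-1}.X)Y))Z\big)\big](ghk),
\]
which follows from the cocycle property $(hk)^{-1}.X=k^{-1}.(h^{-1}.X)$ of the $G_0$-action together with the fact that the $G_0$-action on $\cU(\fg)$ is by algebra automorphisms, so $k^{-1}.((h^{-1}.X)Y)=(k^{-1}.(h^{-1}.X))(k^{-1}.Y)$. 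The unit axioms reduce to $[f(1\cdot Y)](eh)=[f(Y)](h)$ and the analogous right-hand identity, which are immediate from $e.X=X$ and the definition of the $\cU(\fg_0)$-action. The inverse axiom $\mu\circ(\idshf\times i)\circ\Delta=e\circ\eta$ unwinds, using $\overline{XY}=\overline{Y}\,\overline{X}$ and the compatibility of the antipode with the $G_0$-action, to the statement that $\sum f((g^{-1}.\overline{X_{(1)}})X_{(2)})$ evaluated at $e$ equals $[f(1)](e)$ applied appropriately — i.e.\ it comes down to the antipode axiom $\sum \overline{X_{(1)}}X_{(2)}=\eta(X)1$ in $\cU(\fg)$ (invariance under $g^{-1}.-$ being harmless since $\eta(X)1$ is fixed).

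\medskip

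The main obstacle I expect is not any single axiom but keeping the bookkeeping honest: the maps are defined on the model $\cO_G(U)=\Hom_{\cU(\fg_0)}(\cU(\fg),\cO_{G_0}(U))$, and one must consistently track where the $G_0$-action $\Ad$ on $\cU(\fg)$, the left-invariant differential operator action of $\cU(\fg_0)$ on $\cO_{G_0}$, and the comultiplication $\Delta_{\cU(\fg)}$ interact — in particular that the $G_0$-action is by Hopf-superalgebra automorphisms of $\cU(\fg)$, which is exactly what the SHCP axioms guarantee (the action on $\fg$ extends to $\cU(\fg)$, and its differential being the bracket ensures compatibility with the infinitesimal $\cU(\fg_0)$-action used to define $\cO_G$). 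Once these compatibilities are isolated as lemmas, each axiom is a one-line check on PBW generators, exactly as in Koszul's smooth-category argument \cite{koszul}; the analytic input (existence of one-parameter subgroups, analyticity of $g\mapsto f(ge^{tZ})$) enters only to identify the $\cU(\fg_0)$-action concretely and to guarantee that the resulting sections are analytic.
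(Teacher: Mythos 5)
Your overall strategy is the same as the paper's: the real content is that the displayed formulas land in the right Hom-spaces, i.e.\ that $\mu^*$, $i^*$, $e^*$ are $\cU(\fg_0)$-morphisms, which by PBW reduces to $\fg_0$-linearity in each slot; the group-object axioms are then formal consequences of the Hopf structure of $\cU(\fg)$, the group structure of $G_0$, and the fact that $G_0$ acts on $\cU(\fg)$ by Hopf superalgebra automorphisms. This is exactly the paper's division of labour (it carries out the $\fg_0$-linearity check for $\mu^*$ and refers the remaining verifications to the smooth case), and your sketches of coassociativity, counit and antipode are fine.

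However, in the key check you have the two slots reversed, and the slot you call ``immediate'' is precisely the one carrying all the content. Linearity in the $X$-slot needs no bracket: $\mu^*(f)(ZX,Y)(g,h)=f\big((h^{-1}.Z)(h^{-1}.X)Y\big)(gh)=\wt{D}_{h^{-1}.Z}\big[f((h^{-1}.X)Y)\big](gh)$, using only that the action is by algebra automorphisms extending $\Ad$ together with the conjugation identity $ge^{tZ}h=gh\,e^{t(h^{-1}.Z)}$ (the argument $(h^{-1}.X)Y$ does not depend on $g$, so only the evaluation point gets differentiated). Linearity in the $Y$-slot is \emph{not} immediate: in $\mu^*(f)(X,ZY)(g,h)=f\big((h^{-1}.X)ZY\big)(gh)$ the element $Z$ sits in the middle, and moving it out produces the commutator term $f\big([h^{-1}.X,Z]Y\big)(gh)$; on the other side, applying $\wt{D}_Z$ in the $h$-variable differentiates both the evaluation point $ghe^{tZ}$ and the argument $(he^{tZ})^{-1}.X$, and matching the two extra terms is exactly the SHCP axiom that the differential of the $G_0$-action on $\fg$ is the bracket. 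So the axiom you invoke for the $X$-slot is needed (and only needed) in the $Y$-slot, and taking your ``immediate'' at face value would skip the single genuinely nontrivial computation of the proposition — the very one the paper writes out in full.
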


{\bf Note.} We shall discuss the peculiar form of $\mu^*$,
$i^*$, $e^*$ in remark \ref{duality}.
 
\begin{proof} 
The proof of this result is the same as in the differential smooth
setting, where everything is defined in the same way (see \cite{ccf} ch. 7). 
In particular
to prove that $\mu^\ast$, $i^\ast$, $e^\ast$ are $\cU(\fg_0)$-morphisms
is harder than the verification of the compatibility conditions
and the Hopf superalgebra properties.  
As an example, let us verify $\mu$ is well defined
the other
properties being essentially the same type of calculation.
Due to PBW theorem, it is  enough to prove $\fg_0$-linearity. Let hence $Z\in \fg_0$
\begin{eqnarray*}
\mu^\ast \lft f\rgt\lft ZX, Y\rgt\lft g, h\rgt& = &  f \lft h^{-1}\lft ZX\rgt Y\rgt(gh)\\
&= &f\lft (h^{-1}.Z )(h^{-1}.X) Y\rgt(gh)\\
& =& \wt{D}^{}_{h^{-1}.Z}\lfq f\lft (h^{-1}.X) Y\rgt\rgq (gh)
\end{eqnarray*}
on other hand
\begin{eqnarray*} 
\lfq (\wt{D}^{}_{Z}\otimes \idshf)\lft \mu^\ast\lft f\rgt\lft X,Y\rgt\rgt\rgq(g,h) &=& \frac{d}{d t}_{\left.\right|t=0} f\lft (h^{-1}X)Y  \rgt\lft ge^{tZ}h\rgt\\
& = &  \frac{d}{d t}_{\left.\right|t=0} f\lft (h^{-1}X)Y  \rgt\lft gh e^{t (h^{-1}Z)}\rgt\\
& = & \wt{D}^{}_{h^{-1}Z}\lfq f\lft (h^{-1}.X) Y\rgt\rgq (gh). 
\end{eqnarray*}
Similarly for the left entry, one finds:
\begin{eqnarray*}
\mu^\ast\lft f\rgt \lft X, ZY\rgt (g,h) & = & f\lft (h^{-1}X) ZY\rgt(gh)\\
& = & f\lft Z (h^{-1}X) Y +\lfq h^{-1}X, Z\rgq Y\rgt\lft gh\rgt \\
& = & \wt{D}^{}_Z \lft f\lft (h^{-1}X) Y\rgt\rgt (gh) + \\ 
& & f\lft \lfq h^{-1}X, Z\rgq Y\rgt\lft gh\rgt 
\end{eqnarray*}
and
\begin{eqnarray*}
\frac{d}{dt}_{\left.\right|t=0}\mu^\ast\lft f\rgt \lft X,Y\rgt \lft g, he^{tZ}\rgt & = & 
\frac{d}{dt}_{\left.\right|t=0} f \lft ((he^{tZ})^{-1} X) Y\rgt \lft g he^{tZ}\rgt\\
& = &\lfq \wt{D}^{}_Zf\lft (h^{-1}X)Y\rgt\rgq(gh)+ \\
& + & f\lft \lfq (h^{-1}X), Z\rgq Y \rgt (gh).
\end{eqnarray*}
\end{proof}

We are now ready for the proof of theorem \ref{eqcat}
in the analytic setting.

\begin{theorem} \label{anal-eqcat}
There is an equivalence of categories between analytic SHCP 
and analytic supergroups expressed by the functors
$\cK$ and $\cH$ in \ref{eqcat}.
\end{theorem}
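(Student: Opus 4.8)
The plan is to verify that the two functors $\cK$ and $\cH$ from Theorem~\ref{eqcat} are well defined in the analytic category and that they are quasi-inverse to each other. By Proposition~\ref{analsgrps} we already know that, for an analytic SHCP $(G_0,\fg)$, the pair $\bar G=\cK(G_0,\fg)=(G_0,\cO_G)$ is an analytic supergroup; by Theorem~\ref{gamma} it is globally split, so it is a genuine superspace and, together with the Hopf morphisms $\mu^\ast$, $i^\ast$, $e^\ast$, an honest supergroup object in the analytic category. So the first step is to check that $\cK$ is actually a functor: given a morphism $\psi=(\psi_0,\rho^\psi)$ of analytic SHCP, one must check that $f\mapsto \psi_0^\ast\circ f\circ\rho_\psi$ is a morphism of supergroups, i.e.\ that it is a morphism of sheaves of superalgebras compatible with $\mu^\ast$, $i^\ast$, $e^\ast$. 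This is a direct computation using the explicit formulas of Proposition~\ref{analsgrps} together with the compatibility conditions $\rho^\psi_{|\fg_0}=d\psi_0$ and $\Ad(\psi_0(g))\circ\rho^\psi=\rho^\psi\circ\Ad(g)$ in Definition~\ref{def:SHCP}; functoriality ($\cK(\psi'\circ\psi)=\cK(\psi')\circ\cK(\psi)$, $\cK(\id)=\id$) is then immediate. That $\cH$ is a functor is standard: the underlying topological group, the Lie superalgebra $\Lie(G)$, the adjoint action of $G_0$ on $\Lie(G)$ and its differential are all classical constructions, and a supergroup morphism $\phi$ induces $(|\phi|,(d\phi)_e)$ respecting the SHCP structure.

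\smallskip

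The core of the argument is the construction of the natural isomorphisms $\cK\circ\cH\simeq\id_{\supgrp}$ and $\cH\circ\cK\simeq\id_{\shcps}$. The second one is the easy direction: starting from $(G_0,\fg)$, forming $\bar G=\cK(G_0,\fg)$ and then taking $\cH(\bar G)=((\bar G)_0,\Lie(\bar G))$, one has $(\bar G)_0=G_0$ by construction, and one must identify $\Lie(\bar G)$ with $\fg$ as a Lie superalgebra with its $G_0$-action. The even part is $\Lie(G_0)\cong\fg_0$ by hypothesis~(1) of Definition~\ref{def:SHCP}; the odd part is read off from the $\Z$-gradation of $\cO_G$ furnished by Theorem~\ref{gamma}(2): the degree-one part of the cotangent space at $e$ is dual to $\fg_1$, and the super-bracket and adjoint action recovered from $\mu^\ast$ (cf.\ the formulas in Proposition~\ref{analsgrps}) coincide with the bracket and action built into the SHCP. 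For the first isomorphism, let $G=(|G|,\cO_G)$ be an analytic supergroup, set $(G_0,\fg)=\cH(G)$, and build $\bar G=\cK(G_0,\fg)=(G_0,\HOM_{\cU(\fg_0)}(\cU(\fg),\cO_{G_0}))$. One produces a morphism $\cO_{\bar G}(U)\to\cO_G(U)$ as follows: by Proposition~\ref{isoDU} the distributions $D(G,e)$ are identified with $\cU(\fg)$, and each section $s\in\cO_G(U)$ of the original sheaf, together with a point $g\in U$ and a distribution $D\in\cU(\fg)$, yields the scalar $D(\tau_g^\ast s)=[\text{``}D\text{ applied to }s\text{ at }g\text{''}]$, where $\tau_g$ is left translation; this defines a map $\cO_G(U)\to\HOM_{\cU(\fg_0)}(\cU(\fg),\cO_{G_0}(U))=\cO_{\bar G}(U)$, which is a morphism of sheaves of superalgebras by Proposition~\ref{leftinv-operators} (left-invariant differential operators act as $\cU(\fg)$). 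One checks it is an isomorphism by using the global splitting: both sheaves are locally $\cO_{G_0}(U)\otimes\wedge(\fg_1)^\ast$ and the map respects this decomposition, reducing bijectivity to the classical statement that the left-invariant operators separate jets, i.e.\ the Taylor-type expansion already used in the Lemma on finite-codimension ideals. Finally one verifies this isomorphism of superspaces intertwines the Hopf structures, so it is an isomorphism of supergroups, and that it is natural in $G$.

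\smallskip

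I expect the main obstacle to be the last verification: that the canonically constructed superspace isomorphism $G\xrightarrow{\ \sim\ }\cK(\cH(G))$ actually transports the multiplication $\mu_G^\ast$ of the original supergroup to the formula~(\ref{eq:inv})-style multiplication $\mu^\ast$ of Proposition~\ref{analsgrps}. The subtlety is that the formula for $\mu^\ast$ on the SHCP side is written in terms of the twisted action $h^{-1}.X$ and of the coproduct of $\cU(\fg)$, so matching it with $\mu_G^\ast$ requires knowing precisely how the distribution $D\in\cU(\fg)$ interacts with left translation \emph{and} with the group multiplication in $G$ — essentially the identity $\mu_G^\ast$ followed by ``evaluate the left factor at $g$, apply $D$ in the second slot'' must be rewritten, using co-associativity and the fact that $\Ad$ is the conjugation action, into the stated form. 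This is where the compatibility axioms of the SHCP are really used and where sign bookkeeping for the odd generators is most delicate; everything else is either classical (invoked from \cite{vsv}, \cite{ccf} ch.~7) or a mechanical consequence of $\cU(\fg_0)$-linearity and the global splitting of Theorem~\ref{gamma}.
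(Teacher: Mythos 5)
Your overall strategy coincides with the paper's: the heart of both arguments is the comparison morphism $\eta\colon \overline{G}=\cK(\cH(G))\to G$ whose sheaf map sends a section $s\in\cO_G(U)$ to the $\cU(\fg_0)$-linear map $X\mapsto(-1)^{|X|}|D_Xs|$, with $D_X=(1\otimes X)\mu^*$ the left-invariant operator attached to $X\in\cU(\fg)$ -- this is exactly your ``apply the distribution $D$ to $s$ after left translation by $g$'' recipe -- followed by the check that $\eta$ intertwines the multiplications (the computation you correctly single out as the delicate sign/coassociativity step; the paper carries it out explicitly) and the treatment of morphisms via $\psi^*(f)=\psi_0^*\circ f\circ\rho_\psi$.

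Where you genuinely diverge is the final step, showing $\eta$ is an isomorphism, and there your argument has a gap. You propose to use that ``both sheaves are locally $\cO_{G_0}(U)\otimes\wedge(\fg_1)^*$ and the map respects this decomposition.'' But the splitting of the original sheaf $\cO_G$ is not canonical (for analytic supermanifolds it exists only locally, and even locally it involves a choice of odd coordinates), so there is no a priori reason for $\eta^*$ to respect any chosen splitting; as stated this reduction is unjustified, and making it precise would require a filtration-by-nilpotents/associated-graded argument that you do not supply. The paper avoids this entirely: it observes that $|\eta|$ is bijective, computes directly that the differential $(d\eta)_g$ is bijective at every point $g\in G_0$ (a short calculation with $\overline{D}_X$ and $D_X$), and then invokes the inverse function theorem for analytic supermanifolds. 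That last tool is precisely what the paper flags as the feature distinguishing the analytic proof from the algebraic one (where surjectivity of $\eta^*$ must instead be shown by hand, as in Theorem \ref{alg-eqcat}); your sketch should either adopt this pointwise-differential argument or replace the ``respects the decomposition'' claim by an honest jet/filtration argument before the proof is complete.
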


\begin{proof}
Let us first show the correspondence between morphisms. 
If $\phi$ is a morphisms of analytic supergroups, it is immediate
that $(|\phi| ,(d\phi)_e)$ is a morphism of SHCP.
Vice versa, if $\psi=(\psi_0, \rho_\psi)$ is a morphism of SHCP
$(G_0, \fg)$, $(H_0, \fh)$, then $\psi^*:\cO_H(U) \lra \cO_G(\psi_0^{-1}(U))$
defined as $\psi^*(f)=\psi^*_0 \circ f \circ \rho_\psi$ is a sheaf morphism
and $(\psi_0, \psi^*)$ is a morphism of the supergroups $G$ and $H$.
As one can check the assignments detailed in \ref{eqcat} establish a one-to-one
correspondence between the sets of morphisms of SHCP and analytic
supergroups.

\medskip

We now turn to the correspondence between the objects.
Let $G$ be a supergroup and $\overline{G}$ the supergroup obtained
from the SHCP $(G_0, \Lie(G))$, where $G_0$ is the ordinary analytic
group underlying $G$.
As for the smooth setting, let us define the morphism
$\eta \colon \overline{G} \to G$ 
\[
    \begin{aligned}
        \eta^* \colon \cO_G(U) &\to \cO_{\overline{G}}(U) = 
\HOM_{\cU(\fg_0)} \big( \cU(\fg), \cO_{G_0}(U) \big) \\
        s &\mapsto \Big( \overline{s} \colon X \to (-1)^{|X|}|{(D^{}_X s)}| \Big).
    \end{aligned}
\]
Here $D^{}_X$ denotes the left invariant differential operator on $G$ 
associated with $X\in \cU(\fg)$, that is $D^{}_X=(1 \otimes X)\mu^*$.
The definition is well posed as one can directly check, moreover
$\eta$ is a SLG morphism, i.~e.
\[
    \eta \circ \mu_{\overline{G}} = \mu_G \circ (\eta \times \eta)
\]
Indeed, for each $s \in \cO(G)$, $X,Y \in \cU(\fg)$, and $g,h \in
G_0$,
$$
\begin{array}{rl}
    \big[ \big( (\eta^* \otimes \eta^*)\mu_G^*(s) \big)(X,Y) \big](g,h)
    &= (-1)^{|{X}|+|{Y}|} |(D^{}_X \otimes D^{}_Y) \mu_G^*(s)|(g,h) \\ \\
    &= (-1)^{|{X}|+|{Y}|} |{D^{}_{h^{-1}.X} D^{}_Y s }|(gh) \\ \\
    &= \big[ \eta^*(s)\big( (h^{-1}.X)Y \big) \big](gh) \\ \\
    &= \big[ \big( \mu_{\overline{G}}^* \eta^*(s) \big) (X,Y) \big](g,h).
\end{array}
$$
Now the last thing to check is that $\eta$ is an isomorphism. 
This is due to the fact that $|\eta|$ is clearly
bijective and, for each $g \in G_0$, the differential
$(d\eta)_g$ is bijective as
$$
\begin{array}{rl}
    \big[ (d \eta)_g ({{\overline {D}}^{}_X}_{g}) \big] (s)
&    = {\overline{D}^{}_X}_{g} \eta^*(s) =
ev_g({\overline D}^{}_X\eta^*(s))=[{\overline D}^{}_X\eta^*(s)](1)(g)= \\ \\
&    
    = (-1)^{|{X}|} \eta^*(s)(X)(g)
    = |{(D^{}_X s)}|(g)
    = {D^{}_X}_{g}(s)
\end{array}
$$
where we denote with ${\overline D}^{}_X$ a left invariant differential
operator on $\overline{G}$ corresponding to $X \in \cU(\fg)$
while $D^{}_X$ denotes a left invariant differential operator on $G$. 

\medskip

We conclude using the inverse function theorem, which holds also
for analytic supermanifolds and again this is an important difference
with the algebraic setting, where we do not have this tool available.
\end{proof}

\vfil\eject

\begin{remark} {\sl $p$-adic SHCP}.

$p$-adic supermanifolds, supergroups
and SHCP can be defined through the obvious same definitions 
within the framework described classically by
Serre in \cite{se}. In fact since the category of $p$-adic manifolds
resembles very closely the category of analytic manifolds, it is then
only reasonable to expect that one can develop along the same lines
the theory of $p$-adic supermanifolds. Once the basic results,
like the inverse function theorem, are established, the equivalence
of categories between $p$-adic supergroups and the $p$-adic
SHCP will then follow through the same proof we have
detailed for the analytic category.
\end{remark}

\subsection{Algebraic SHCP} \label{algebraic-shcp}

We now prove our main result, namely 
the theorem \ref{eqcat} in the case of $G$ an affine
algebraic supergroup over a field of characteristic zero, algebraically
closed. \\
The category of affine algebraic supergroups is 
equivalent to the category of commutative Hopf superalgebras, 
hence we need to show that
there is a 
unique commutative Hopf superalgebra $\cO(G)$ associated to a
SHCP $(G_0,\fg)$, namely the superalgebra of the global sections
of the sheaf $\cO_G$ as it is defined in \ref{def:SHCP}.

\medskip

We would like to state and prove the algebraic analogue of 
proposition \ref{analsgrps}. In the proof of such proposition
there is an essential use of the exponential, hence we now want to
formally introduce this notation in the algebraic setting,
so that we can reproduce all the arguments, though being well
aware that the exponential notation has a very different interpretation
in the two categories analytic and algebraic.

\medskip

Notice that since the exponential appears for the action of $\cU(\fg_0)$
on $\cO(G_0)$ (see beginning of sec. 
\ref{analytic-shcp}), the question is entirely classical and it
is treated in detail in \cite{dg} ch. 2 for the algebraic
setting. We shall briefly review
few key facts, sending the reader to \cite{dg} for all the details.

\medskip

Let $G_0$ be an algebraic group and $A$ a commutative
algebra, $p:A(t) \lra A$, $t^2=0$ the natural projection, 
$t$ even. By definition
$\Lie(G_0)(A)$ $=$ $\ker G_0(p)$. Since $G_0$ is affine we
have $G_0 \subset \rGL(V)$ for a suitable vector space $V$, hence
we can write:
$$
\begin{array}{rl}
\Lie(G_0)(A) &= \left\{\, 1+ tZ \, \right\} \subset G_0(A(t))
\subset \rGL(V)(A(t)) \\ \\ 
&= \rGL(V)(A)+t\End(V)(A)
\end{array}
$$
for suitable $Z \in \End(V)(A)$, where $\End(V)$ is the functor of points
of the superscheme of the endomorphisms of the vector space $V$. Very often
$\Lie(G_0)$ is identified with the subspace in $\End(V)$ consisting
of the elements $Z$.
As a notation device we define: 
$$
e^{tZ}=1+tZ \in G_0(A(t)).
$$
Let $g \in G_0(A)=\Hom(\cO(G_0),A)$, that is,
$g$ is an $A$-point of $G_0$, and let $f \in \cO(G_0)$. 
As another common notational device, we denote
$g(f)$ with $f(g)$. Since $A$ embeds naturally in $A(t)$ 
we can view $g$ also as an $A(t)$-point of $G_0$ and consider
$f(ge^{tZ})$. We then define:
$$
\frac{d}{dt}_{\left.\right|t=0} f(ge^{tZ})=b, \quad
\hbox{where} \quad f(ge^{tZ})=(ge^{tZ})(f)=a+bt \in A(t).
$$
With such definition one sees that 
$\frac{d}{dt}_{\left.\right|t=0} f(ge^{tZ})$ corresponds to the natural
action of $Z \in \Lie(G_0)$ on $\cO(G_0)$ via left invariant operators, that is
$$
\frac{d}{dt}_{\left.\right|t=0} f(ge^{tZ})=(1 \otimes Z)\mu^*(f)
$$
that we denoted with $\wt{D}^{}_Zf$ in the analytic category.

\medskip

We now go back to the super setting and prove the analogue of
proposition \ref{analsgrps}.

\begin{proposition} \label{algHopfstructure}
The superalgebra $\cO(G)=\Hom(\cU(\fg),\cO(G_0))$ 
associated to the algebraic SHCP $(G_0,\fg)$
is an Hopf superalgebra 
where the comultiplication $\mu^*$, antipode $i^*$ and counit $e^*$
\footnote{In analogy with proposition \ref{analsgrps} we have kept
the terminology $\mu^*$, $i^*$, $e^*$, though we are not making (yet)
any claim on the sheaf morphisms.}
and are defined as follows: 
\begin{align}
\big[ \mu^\ast (f) (X,Y) \big] (g,h) &= 
\big[ f \big( (h^{-1}.X) Y \big) \big](gh) \\
    \label{eq:inv}
\big[ i^\ast (f) (X) \big] (g^{-1}) &=  
\big[ f(g^{-1}.\overline{X}) \big](g) \\
e^\ast(f) &=  \big[ f(1) \big](e)
\end{align}
for $f \in \cO(G)$, $g,h \in |G|$. $\overline{X}$ denotes
the antipode in $\cU(\fg)$.
\end{proposition}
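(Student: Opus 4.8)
The plan is to transcribe the proof of Proposition~\ref{analsgrps} into the algebraic setting, replacing the analytic one-parameter subgroups by the formal exponential $e^{tZ}=1+tZ\in G_0(A(t))$ over the ring of dual numbers $A(t)$, $t^2=0$, introduced above. What makes this transcription legitimate is the identity $\left.\frac{d}{dt}f(ge^{tZ})\right|_{t=0}=(\wt D_Z f)(g)$, which says that the formal $A(t)$-computation reproduces precisely the action of $\cU(\fg_0)$ on $\cO(G_0)$ by left-invariant operators entering the very definition of $\cO_G$. Since $\mathrm{char}(k)=0$ forces the affine algebraic group $G_0$ to be smooth, hence reduced, a regular function on $G_0$ or on $G_0\times G_0$ is determined by its values on $k$-points, so the three displayed formulas determine $\mu^\ast$, $i^\ast$ and $e^\ast$ uniquely, if they determine them at all.

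First I would check that the formulas land in the correct spaces. Because $G_0$ acts algebraically on $\fg$, hence on $\cU(\fg)$, for a fixed $X$ one may write $h^{-1}.X=\sum_i c_i(h)X_i$ in a PBW basis $\{X_i\}$ with $c_i\in\cO(G_0)$; then $\mu^\ast(f)(X,Y)=\sum_i(c_i\circ\mathrm{pr}_2)\cdot\mu_{G_0}^\ast\big(f(X_iY)\big)$ is manifestly an element of $\cO(G_0)\otimes\cO(G_0)$, and similarly $i^\ast(f)(X)\in\cO(G_0)$ and $e^\ast(f)\in k$. Next one must verify that $\mu^\ast(f)$ is $\cU(\fg_0)\otimes\cU(\fg_0)$-linear and that $i^\ast(f)$, $e^\ast(f)$ are $\cU(\fg_0)$-linear, so that they lie respectively in $\cO(G)\otimes\cO(G)\cong\Hom_{\cU(\fg_0)\otimes\cU(\fg_0)}\big(\cU(\fg)\otimes\cU(\fg),\cO(G_0)\otimes\cO(G_0)\big)$, in $\cO(G)$, and in $k$; these identifications rely on $\cU(\fg)$ being a free $\cU(\fg_0)$-module of finite rank, which is Theorem~\ref{gamma}(1). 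By PBW it is enough to test linearity against an element $Z\in\fg_0$ inserted in one slot, and this is word-for-word the computation carried out in the proof of Proposition~\ref{analsgrps}, now read with $\wt D_Z f(g)=\left.\frac{d}{dt}f(ge^{tZ})\right|_{t=0}$ understood algebraically.

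It then remains to verify the Hopf superalgebra axioms. That $\mu^\ast$ and $e^\ast$ are morphisms of superalgebras, so that $(\cO(G),\mu^\ast,e^\ast)$ is a bialgebra, follows from the fact that the adjoint action of each $h\in G_0$ on $\cU(\fg)$ is by Hopf superalgebra automorphisms (it preserves $\fg$), hence is compatible with $\Delta_{\cU(\fg)}$; like the coassociativity of $\mu^\ast$, the counit property of $e^\ast$ and the antipode identities for $i^\ast$, this reduces to an evaluation on $k$-points followed by a reduction to the PBW generators of $\cU(\fg)$, exactly as in the smooth case. Finally, supercommutativity of $\cO(G)$ --- which the equivalence with affine algebraic supergroups will require --- is immediate from the global splitting $\cO(G)\cong\cO(G_0)\otimes\wedge(\fg_1)^\ast$ of Theorem~\ref{gamma}(2), since $\cO(G_0)$ is commutative and $\wedge(\fg_1)^\ast$ is an exterior algebra on odd generators.

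The main obstacle is the well-definedness step --- the $\cU(\fg_0)$-(bi)linearity of $\mu^\ast(f)$, $i^\ast(f)$ and $e^\ast(f)$ --- which, just as was flagged in the analytic case, is considerably more delicate than the compatibility conditions and the Hopf-axiom checks. The only genuinely new issue relative to the analytic proof is bookkeeping: one must confirm that every step performed there with honest exponentials and derivatives stays valid once ``$e^{tZ}$'' is nothing but the symbol $1+tZ$ over $A(t)$, and this is exactly what the preceding discussion of $\left.\frac{d}{dt}f(ge^{tZ})\right|_{t=0}$ has been set up to guarantee.
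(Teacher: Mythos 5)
Your proposal is correct and follows essentially the same route as the paper: the paper's own proof simply declares that the computation of Proposition \ref{analsgrps} carries over verbatim once the exponential notation $e^{tZ}=1+tZ$ over dual numbers gives the identity $\frac{d}{dt}_{\left.\right|t=0}f(ge^{tZ})=(\wt D_Z f)(g)$ an algebraic meaning, which is exactly the transcription you perform. Your additional remarks (target spaces via the PBW freeness of $\cU(\fg)$ over $\cU(\fg_0)$, reduction of linearity checks to $Z\in\fg_0$, supercommutativity from the global splitting) are consistent elaborations of that same argument rather than a different approach.
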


\begin{proof} It is the same as proposition \ref{analsgrps}. Though
the context is different, once the exponential terminology assumes a
meaning for the algebraic category, the calculations are the same.
\end{proof}

Next proposition shows a very natural fact, 
namely that given a SHCP $(G_0,\cO_G)$ the sheaf $\cO_G$ is
the structural sheaf associated with
the superalgebra of its global sections $\cO(G)$, so that the morphisms
$\mu^*$, $i^*$, $e^*$ are actually defined as the appropriate
sheaf morphisms, corresponding to $\mu$, $i$, $e$, multiplication,
inverse and unit in the algebraic supergroup $G=\uspec \cO(G)$.
corresponding to the SHCP
$(G_0,\fg)$.

\begin{proposition}
Let $(G_0,\fg)$ be a SHCP, with $G_0$ an affine group scheme and 
let $\cO_G$ as in \ref{def:SHCP}. Then 
$G:=(G_0, \cO_G)$ is a supergroup scheme.
\end{proposition}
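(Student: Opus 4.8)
The plan is to show that the abstract superalgebra $\cO(G)$ of global sections, equipped with the Hopf structure $(\mu^*, i^*, e^*)$ from Proposition \ref{algHopfstructure}, is a \emph{commutative} Hopf superalgebra, and that its associated affine superscheme $\uspec \cO(G)$ has precisely $(G_0,\cO_G)$ as its underlying superspace; functoriality of $\uspec$ then transports $(\mu^*,i^*,e^*)$ into genuine sheaf morphisms making $(G_0,\cO_G)$ a supergroup scheme. First I would check commutativity of $\cO(G)=\Hom(\cU(\fg),\cO(G_0))$ under the product $f_1\cdot f_2 = m_{\cO(G_0)}\circ(f_1\otimes f_2)\circ\Delta_{\cU(\fg)}$: since $\cO(G_0)$ is commutative and $\cU(\fg)$ is a cocommutative Hopf superalgebra (being the enveloping algebra of a Lie superalgebra), super-commutativity of the convolution product follows by the standard argument, so $\cO(G)$ is a commutative superalgebra and hence defines an object $\uspec\cO(G)$ of the category of affine algebraic superschemes.

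Next I would identify the topological space and structure sheaf of $\uspec\cO(G)$ with $(G_0,\cO_G)$. By Theorem \ref{gamma}(2) we have the global splitting $\cO(G)\simeq \cO(G_0)\otimes\wedge(\fg_1)^*$ as superalgebras, so the even part $\cO(G)_0$ differs from $\cO(G_0)$ only by a nilpotent ideal (generated by the image of $\wedge^{\mathrm{even},\geq 2}(\fg_1)^*$ together with products of odd elements). Hence $\spec$ of the reduced even part recovers $|G_0|$, and more precisely $\uspec\cO(G)$ has reduced underlying scheme $G_0$; the sheaf of $\uspec\cO(G)$ on a basic open $U$ is the localization of $\cO(G)$, which by the same splitting is $\cO_{G_0}(U)\otimes\wedge(\fg_1)^*\simeq\Hom_{\cU(\fg_0)}(\cU(\fg),\cO_{G_0}(U))=\cO_G(U)$ — here one uses that $\cU(\fg)$ is a \emph{finite} free $\cU(\fg_0)$-module (again Theorem \ref{gamma}(1)), so $\Hom_{\cU(\fg_0)}(\cU(\fg),-)$ commutes with the relevant localizations. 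This shows $(G_0,\cO_G)\cong\uspec\cO(G)$ as superschemes.

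Finally, since $\mu^*,i^*,e^*$ were already verified in Proposition \ref{algHopfstructure} to satisfy the Hopf superalgebra axioms on $\cO(G)$, applying the (contravariant) equivalence $\uspec$ between commutative Hopf superalgebras and affine algebraic supergroups produces morphisms $\mu=\uspec(\mu^*)\colon G\times G\to G$, $i=\uspec(i^*)\colon G\to G$ and the unit $\uspec(e^*)$ satisfying the group axioms; under the identification of the previous paragraph these are exactly the sheaf morphisms making $(G_0,\cO_G)$ a supergroup scheme, as claimed. The main obstacle I anticipate is the bookkeeping in the second step: one must be careful that the two descriptions of $\cO_G$ — the $\Hom_{\cU(\fg_0)}$ definition (Definition \ref{shcpsheaf}) and the localization of the global ring $\cO(G)$ — agree on a basis of opens, which relies on the finiteness of $\cU(\fg)$ over $\cU(\fg_0)$ and on $G_0$ being affine so that $\cO_{G_0}$ is the structure sheaf of $\spec\cO(G_0)$; the commutativity check and the transport along $\uspec$ are routine.
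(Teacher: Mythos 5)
Your proposal is correct and follows essentially the same route as the paper: establish the (commutative) Hopf superalgebra structure on $\cO(G)$ via Proposition \ref{algHopfstructure}, then identify $(G_0,\cO_G)$ with $\uspec\cO(G)$ using the global splitting of Theorem \ref{gamma} and the finiteness of $\cU(\fg)$ as a free $\cU(\fg_0)$-module. The only cosmetic difference is that you match the sheaves on basic opens via localization, while the paper does the comparison stalkwise; the underlying finiteness argument is the same.
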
 

\begin{proof}
In proposition \ref{algHopfstructure} we have seen that
$\cO(G):=\Hom_{\cU(\fg_0)}(\cU(\fg),\cO_{G_0}(G_0))$ has an Hopf superalgebra 
structure, moreover by \ref{gamma} it is globally split. Hence
we only need to prove that $G=\uspec \cO(G)$. Clearly the topological
spaces underlying the superspaces
$G=(G_0, \cO_G)$ and  $\uspec \cO(G)$ are homeomorphic. We only need 
to show that
$\cO_{\cO(G)} \cong \cO_G$, where $\cO_{\cO(G)}$ denotes the structural
sheaf associated with the superring $\cO(G)$. 
We set up a morphism:
$$
\begin{array}{cccc}
\phi&\cO_G(U) & \lra &  \cO_{\cO(G)}(U) \\
&s:\cU(\fg) \lra \cO_{G_0}(U) & \mapsto & \phi(s):U \lra 
\coprod_{x \in U} {\cO(G)_x} \\ \\
\end{array}
$$
where $\phi(s)$ is defined as follows. Any $s \in \cO_G(U)$ gives
raise naturally to $s_x: \cU(\fg) \lra \cO_{G_0}(U) \lra \cO_{G_0,x}$.
Since as a $\cU(\fg_0)$ module, $ \cU(\fg)$ is finitely generated, say
by $N$ generators, once we fix those generators, $s_x$ is equivalent
to the choice of $N$ elements in   $\cO_{G_0,x}$. Since likewise
${\cO(G)_x}$ is finitely generated by $N$ elements as free $\cO_{G_0,x}$-module
(those $N$ elements corresponds dually to the generators of $ \cU(\fg)$
as $ \cU(\fg_0)$-module), we have that $s_x$ can be viewed as an element
of ${\cO(G)_x}$. So we define:
$$
\phi(s)(x)=s_x, \qquad x \in U.
$$
We leave to the reader the check that $\phi$ is a sheaf
isomorphism. 
\end{proof}

\medskip

\begin{theorem} \label{alg-eqcat}
The category of algebraic SHCP is equivalent to the 
category of affine algebraic
supergroups.
\end{theorem}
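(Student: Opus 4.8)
The plan is to assemble the equivalence from the pieces already established, treating it essentially as the algebraic counterpart of Theorem~\ref{anal-eqcat}, but replacing the single use of the inverse function theorem by a direct check at the level of Hopf superalgebras. First I would spell out the functors. On objects, $\cK$ sends $(G_0,\fg)$ to $\bar G := (G_0, \cO_G)$ with $\cO_G$ as in Definition~\ref{shcpsheaf}; by the preceding propositions this is a supergroup scheme, and it is determined by the commutative Hopf superalgebra $\cO(G) = \Hom_{\cU(\fg_0)}(\cU(\fg), \cO(G_0))$ with the structure maps of Proposition~\ref{algHopfstructure}. On objects, $\cH$ sends an affine algebraic supergroup $G$ to the pair $(G_0, \Lie(G))$, where $G_0 = (|G|, \cO_G/\cJ)$ is the underlying classical group scheme and $\Lie(G) = T_e(G)$ with its bracket and its $G_0$-action by $\Ad$; one checks $(G_0,\Lie(G))$ satisfies the axioms of Definition~\ref{def:SHCP}, which is standard. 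On morphisms the two functors are as displayed in Theorem~\ref{eqcat}, and that these are well defined and functorial is the same verification indicated in the proof of Theorem~\ref{anal-eqcat} --- it is purely formal and does not use anything analytic, so I would simply say so.

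Next I would produce the two natural isomorphisms. For $\cH\circ\cK \simeq \mathrm{id}_{\shcps}$: given $(G_0,\fg)$, the underlying classical group of $\bar G$ is $G_0$ itself, so it remains to identify $\Lie(\bar G)$ with $\fg$ compatibly with bracket and $\Ad$. By Observation~\ref{obsdistr} and the global splitting of Theorem~\ref{gamma}, $T_e(\bar G) \cong (m_{\bar G,e}/m_{\bar G,e}^2)^\ast$; using the description $\cO_{\bar G,e} \cong \cO_{G_0,e}\otimes \wedge(\fg_1)^\ast$ one reads off $T_e(\bar G)_0 \cong T_e(G_0) \cong \fg_0$ and $T_e(\bar G)_1 \cong \fg_1$, and the formula for $\mu^\ast$ in Proposition~\ref{algHopfstructure} shows the induced bracket and $\Ad$-action are the given ones. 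This is a finite computation on generators, so I would carry it only far enough to make the identification transparent.

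For $\cK\circ\cH \simeq \mathrm{id}_{\supgrp}$: given an affine algebraic supergroup $G$, set $\fg = \Lie(G)$ and let $\bar G = \cK(\cH(G))$. I would define $\eta\colon \bar G \to G$ exactly as in the analytic proof, by $\eta^\ast(s)\colon X \mapsto (-1)^{|X|}\, |D_X s|$ where $D_X = (1\otimes X)\mu_G^\ast$ is the left-invariant differential operator on $G$ attached to $X \in \cU(\fg) = D(G,e)$ (using Proposition~\ref{isoDU}). The verification that $\eta^\ast$ is a well-defined $\cU(\fg_0)$-morphism and a Hopf superalgebra morphism, intertwining $\mu_{\bar G}^\ast$ with $\mu_G^\ast$ and likewise for $i^\ast$, $e^\ast$, is \emph{word for word} the calculation already displayed in the proof of Theorem~\ref{anal-eqcat}; I would reference it rather than repeat it. The point where the algebraic argument must genuinely depart is the final step: concluding that $\eta$ is an isomorphism. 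In the analytic case one invokes the inverse function theorem after checking $|\eta|$ is a bijection and each $(d\eta)_g$ is an isomorphism. Here I would instead argue directly: $\eta^\ast$ is a morphism of commutative Hopf superalgebras between superalgebras that are both globally split over $\cO(G_0)$ --- the source by Theorem~\ref{gamma}, the target because $G \to G_0$ admits a splitting of its structure sheaf (every affine algebraic supergroup is globally split over its even part). Filtering both sides by powers of the odd ideal and passing to associated graded, $\eta^\ast$ becomes, in degree $0$, the identity of $\cO(G_0)$, and in higher degrees the map $\Sym^\bullet(\fg_1^\ast) \to \bigwedge^\bullet(\fg_1^\ast)$ induced on cotangent data, which is an isomorphism because $(d\eta)_e$ restricts to the identity on $\fg_1$. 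Hence $\eta^\ast$ is a filtered isomorphism, so $\eta$ is an isomorphism of supergroup schemes, and it is visibly natural in $G$.

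The main obstacle is precisely this last step: in the absence of an inverse function theorem one must replace the analytic-geometric argument by the algebraic observation that $\eta^\ast$ is a morphism of globally split Hopf superalgebras inducing an isomorphism on associated graded objects --- equivalently, that $\eta^\ast$ is strictly compatible with the odd-ideal filtrations and an isomorphism in each filtration quotient. Everything else (well-definedness and functoriality of $\cH$, $\cK$; the natural isomorphism $\cH\cK \simeq \mathrm{id}$; the Hopf-morphism identities for $\eta$) transfers verbatim from the analytic proof via Propositions~\ref{isoDU}, \ref{algHopfstructure} and Theorem~\ref{gamma}.
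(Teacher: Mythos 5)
Your route agrees with the paper's up to the decisive last step (the functors, the natural map $\eta^\ast(s)\colon X\mapsto(-1)^{|X|}|D_Xs|$, and the transfer of the Hopf-morphism verifications from the analytic proof are all as in the paper), but it then diverges: the paper proves that $\eta^\ast$ is \emph{surjective} by an explicit computation --- embedding $G\subset\rGL(m|n)$, using the pointwise linear independence of the left-invariant odd vector fields $D_{X_i}$ and solving the system $(-1)^{|X^I|}|D_{\gamma(X^I)}s|=\overline{s}(X^I)$ for $s$ --- and obtains the isomorphism by combining this with Masuoka's splitting and the injectivity criterion of \cite{fg2}, whereas you replace all of this by a filtration/associated-graded argument. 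That replacement is a legitimate and potentially cleaner alternative, but as written it has a gap at exactly the point where the algebraic proof must do real work. The degree-one piece of $\mathrm{gr}(\eta^\ast)$ is a map of $\cO(G_0)$-modules $J_G/J_G^2\to J_{\overline G}/J_{\overline G}^2$ between free modules of rank $\dim\fg_1$, and knowing only its fibre at the single point $e$ --- which is all that the computation of $(d\eta)_e|_{\fg_1}$ gives you --- does not make it an isomorphism of modules; for a general morphism of globally split superschemes whose reduced part is the identity the analogous statement is simply false. You must invoke the group structure: left translations trivialize the conormal module, $J/J^2\cong\cO(G_0)\otimes\fg_1^\ast$ equivariantly, and since $\eta^\ast$ is a morphism of Hopf superalgebras lying over $|\eta|=\mathrm{id}_{G_0}$ it intertwines these trivializations, so the degree-one graded map is $\mathrm{id}_{\cO(G_0)}\otimes\bigl((d\eta)_e|_{\fg_1}\bigr)^\ast$ and hence an isomorphism. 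This homogeneity input is the same fact the paper uses in coordinate form (``the left-invariant vector fields $D_{X_1},\dots,D_{X_n}$ are linearly independent at each point''), so it cannot be skipped.

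Once that is supplied, the rest of your argument does close up: both associated graded algebras are exterior algebras over $\cO(G_0)$ generated in degrees $0$ and $1$ (Theorem \ref{gamma} for $\cO(\overline G)$, Masuoka's splitting theorem for $\cO(G)$ --- note that for this reason there is no $\Sym^\bullet(\fg_1^\ast)\to\wedge^\bullet(\fg_1^\ast)$ comparison; both sides in degree $n$ are $\cO(G_0)\otimes\wedge^n\fg_1^\ast$, and they are modules, not bare vector spaces), so $\mathrm{gr}^n(\eta^\ast)=\wedge^n_{\cO(G_0)}\mathrm{gr}^1(\eta^\ast)$ is an isomorphism in every degree, and the finiteness of the odd filtration gives that $\eta^\ast$ itself is an isomorphism, naturally in $G$. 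With the equivariance step made explicit your proof is correct and avoids both the coordinate computation on $\rGL(m|n)$ and the appeal to \cite{fg2}; what the paper's version buys instead is an explicit inversion formula for $\eta^\ast$ (illustrated in its $\rGL(1|1)$ example), at the cost of choosing an embedding and coordinates.
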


\begin{proof} 
We need to establish a one to one correspondence between
the objects and the morphisms.

As for the objects,
if $(G_0,\fg)$ is an algebraic SHCP, we can define an affine
algebraic supergroup defining
the following Hopf superalgebra (see \ref{algHopfstructure}):
$$
\cO(G_0,\fg)=\uHom_{\cU(\fg_0)} (\cU(\fg),\cO(G_0)).
$$
Vice-versa, if we have an algebraic supergroup, we can find
right away the SHCP associated to it. What we need to show is that
these operations are one the inverse of the other, that is:
$$
\cO(G_0,\fg) \cong \cO(G)
$$
where $G_0$ is the algebraic group underlying $G$ and $\fg=\Lie(G)$.
Certainly they are isomorphic as $\cO(G_0)$-modules, since
they have the same reduced part and, by a result of Masuoka 
\cite{masuoka}, they both
can be written as $\cO(G_0) \otimes \wedge$ for some 
exterior algebra $\wedge$, but being their odd dimension
the same, the two exterior algebras are isomorphic.

\medskip

We can set a map:
$$
\begin{array}{ccc}
\eta^*: \cO(G) & \lra & \cO(G_0,\fg) \\
s & \mapsto & \overline{s}:X \mapsto (-1)^{|X|}|D_X(s)|
\end{array}
$$
where $D_X(s)=(1 \otimes X)\mu^*$.
This is a well defined morphism of Hopf
superalgebras 
and $X \mapsto (-1)^{|X|}|D_X(s)|$ is a $\cU(\fg_0)$-morphism. This
is done precisely in the same way as in the proof of \ref{anal-eqcat}.

\medskip

We now want to show that $\eta^*$ is surjective. This will imply that
$\eta^*$ is an isomorphism. In fact the two given supergroups 
$G=\uspec \cO(G)$ and $\overline{G}=\uspec  \cO(G_0,\fg)$  are smooth
superschemes, with the same underlying topological space and
same Lie superalgebra (hence the same superdimension), and $\eta^*$ induces 
an injective morphism $\eta:\overline{G} \lra G$ (see \cite{fg2} sec. 2).

\medskip

For the surjectivity of $\eta^*$, 
we need to show that for each morphism of $\cU(\fg_0)$-modules
$\overline{s}:\cU(\fg) \lra \cO(G_0)$, 
there exists $s \in \cO(G)$, such that $\overline{s}(X)= (-1)^{|X|}|D_X(s)|$.
Since $\cU(\fg) \cong \cU(\fg_0) \otimes \wedge(\fg_1)$ (see theorem 
\ref{gamma}) and $\overline{s}$ is a morphism of $\cU(\fg_0)$-modules,
$\overline{s}$ is determined by $\overline{s}(\gamma(X^I))$ for
$X^I=X_1^{i_1} \dots X_n^{i_n}$, with $X_i$ a basis for $\fg_1$ and
$i_j=0,1$ (again refer to \ref{gamma}). Notice that
$X_i=\gamma(X_i)$.
Since $X_1, \dots, X_n$ are linearly independent, also the
corresponding left invariant vector fields $D_{X_1}, \dots , D_{X_n}$
will be linearly independent at each point. 
Let $D_{\gamma(X)}$ denote the left invariant differential operator
corresponding to $\gamma(X) \in \cU(\fg)$.
Notice that fixing a suitable basis in $\cU(\fg)$, the 
linear morphism $X \mapsto \gamma(X)$ corresponds to an
upper triangular matrix and sends linearly independent vectors to
linearly independent vectors.
Consider the equation 
$(-1)^{|X^I|}|D_{\gamma(X^I)}s| =\overline{s}(X^I)$, for 
$X^I=X_1^{i_1} \dots {X_n}^{i_n}$ a monomial in 
$\wedge(\fg_1)$. 
This is an equation where each $D_{X_i}$ appearing
in the expression for $D_{\gamma(X^I)}$ can be expressed as
$$
D_{X_i}=\sum a_i \partial_{x_{ij}}, \qquad p(a_i) \neq p(x_{ij})
$$
where 
$x_{ij}$ are global coordinates on
$\rGL_{m|n} \supset G$ (regardless of their parity). 

Since $D_{X_1}^{i_1} \dots D_{X_n}^{i_n}$ are linearly independent
by the PBW theorem (see also proposition \ref{leftinv-operators}),  
also $D_{\ga(X)}$ will be linearly independent and
$(-1)^{|X|}|D_{\gamma(X^I)}|=\overline{s}(X^I)$ will
yield a solution 
$$
\partial_{x_{i_1j_1}} \dots \partial_{x_{i_rj_r}} s=a_{i_1j_1 \dots i_rj_r}
$$
for all ${i_1j_1} \dots {i_rj_r}$ so that
$$
s=\sum a_{i_1j_1 \dots i_rj_r}x_{i_1j_i} \dots x_{i_rj_r}.
$$
We leave to the reader the correspondence between morphisms.
\end{proof}

\begin{example}
We want to verify explicitly the surjectivity of $\eta^*$ in
the case of $\rGL(1|1)$ and make few remarks on how to extend the
calculation to the case of $G=\rGL(m|n)$. Let $\cO(\rGL(1|1))=
k[a_{11},a_{22}, \al_{12}, \al_{21}][a_{11}^{-1}, a_{22}^{-1}]$.
Let $D_{12}$ and $D_{21}$ denote the left invariant vector fields
corresponding to the generators $\partial_{\al_{12}}$, 
$\partial_{\al_{21}}$ of $\Lie(G)_1$:
$$
\begin{array}{rl}
D_{12}&=\left( 1 \otimes \partial_{\al_{12}}\right) \mu^*=
a_{11} \partial_{\al_{12}}+\al_{21} \partial_{a_{22}} \\ \\
D_{21}&=\left( 1 \otimes \partial_{\al_{21}}\right) \mu^*=
\al_{12} \partial_{a_{11}}+a_{22} \partial_{\al_{21}} \\ \\
\end{array}
$$
$$
\begin{array}{rl}
\gamma(D_{12}D_{21})&=1/2(D_{12}D_{21}-D_{21}D_{12})=1/2
(a_{11}\partial_{a_{11}}-a_{22}\partial_{a_{22}})+
\\ \\ 
&+a_{11}a_{22} 
\partial_{\al_{12}} \partial_{\al_{21}}+ \hbox{terms with coefficients
in} \, J_{\cO(\rGL(1|1))}
\end{array}
$$
where $J_{\cO(\rGL(1|1))}$ denotes as usual the ideal generated by
the odd elements.
Notice that the terms with coefficients
in $J_{\cO(\rGL(1|1))}$ do not contribute
in the expression $|D_{\ga(D_{12}D_{21})}s|$. For the same reason, notice
that the term $a_{11}\partial_{a_{11}}-a_{22}\partial_{a_{22}}$ will give
a contribute only if applied to $s^0$, and consequently can be
considered not as unknown, but as a known term.
This is important
in case one wants to generalize this procedure to $\rGL(m|n)$; in fact
only the terms containing only odd
derivations will produce new quantities to be determined.

\medskip
  
Given $\overline{s}:\cU(\fg) \lra \cO(G_0)$ we want to determine
$s \in \cO(G)$, with $\eta^*(s)=\overline{s}$. Since $\Lie(\rGL(1|1)_1=
\langle \partial_{\al_{12}},\partial_{\al_{21}} \rangle$,
$\overline{s}$ is
determined once we know its image on $\wedge \Lie(\rGL(1|1)_1$ that is
$$
s^0=\overline{s}(1), \quad
s^{12}=\overline{s}( \partial_{\al_{12}}), \quad 
s^{21}=\overline{s}(\partial_{\al_{21}}),  \quad 
s^{12,21}=\overline{s}(\gamma(\partial_{\al_{12}}\partial_{\al_{21}})).
$$
Consequently the $s$ we want to determine must satisfy the equations:
$$
\begin{array}{rl}
s^0 &=|1s| \\ \\
s^{12}&=-|a_{11} \partial_{\al_{12}}s+\al_{21} \partial_{a_{22}}s| \\ \\
s^{21}&=-|\al_{12} \partial_{a_{11}}s+a_{22} \partial_{\al_{21}}s| \\ \\
s^{12,21}&=|1/2(a_{11}\partial_{a_{11}}s-a_{22}\partial_{a_{22}}s)+a_{11}a_{22} 
\partial_{\al_{12}} \partial_{\al_{21}}s|
\end{array}
$$
A simple calculation gives us:
$$
\begin{array}{rl}
s &= s^0+ \frac{\al_{12}s^{12}}{a_{11}}- 
 \frac{\al_{21} s^{21}}{a_{22}}+ \\ \\
&+\left[s^{12,21}-\frac{1}{2} 
\left(a_{11}\partial_{a_{11}}s^0-a_{22}\partial_{a_{22}}s^0 \right)
\right] \frac{\al_{12}\al_{21}}{a_{11}a_{22}}.
\end{array}
$$

There is no conceptual obstacle to extend this calculation
to the case of $G=\rGL(m|n)$. If $\cO(G)=k[a_{ij}, \al_{kl}][d_1^{-1},d_2^{-1}]$
where $d_1=det(a_{ij})_{\{1 \leq i,j \leq m\}}$ and 
$d_2=det(a_{ij})_{\{m+1 \leq i,j \leq m+n\}}$, we have that the
left invariant vector fields are given by:
$$
X_{ij}=\left( 1 \otimes \partial_{x_{ij}}\right) \mu^*=
\sum_k x_{ki} \partial_{x_{kj}}
$$
where $x_{ij}$ denote the coordinates on $\rGL(m|n)$ regardless
of their parity.
We can then repeat the calculation we did above. Notice that
any even derivation appearing in the expression $|D_{\gamma(X)}s|$
will affect only $s^0=|1s|$ since we are taking the reduction
modulo the ideal of the odd nilpotents. 
\end{example}

In the following remark we clarify the relation between the 
Hopf superalgebra $\cO(G)=\Hom(\cU(\fg), \cO(G_0))$ associated to
the SHCP $(G_0, \fg)$ and the
distribution superalgebra $D(G)$ of the supergroup $G$ (also naturally
associated to the same SHCP).
 
\begin{remark}  \label{duality}
For an affine supergroup $G$, 
the superalgebra of distributions $D(G)$ has a natural Hopf superalgebra
structure as we detail in \ref{distr-hopf}. Such structure is
inherited by $k|G| \otimes \cU(\fg)$ through the linear isomorphism
with $D(G)$ detailed in \ref{iso}. The superalgebra of global sections of $G$,
$\cO(G)=\Hom(\cU(\fg), \cO(G_0))$ can then be naturally viewed as a subspace of
$D(G)^* \cong (k|G| \otimes \cU(\fg))^*$, 
since elements in $\cO(G)$ arise as suitable morphisms 
$|G| \times \cU(\fg) \lra k$. 
One can then immediately verify
that the Hopf superalgebra structure on $\cO(G) \subset D(G)^*$ 
is precisely obtained by duality, 
from the Hopf superalgebra on $D(G)$ suitably restricting the
comultiplication, counit and antipode morphisms.
\end{remark}

\section{Action of supergroups and SHCP's} 
\label{actions}

In this section
we want to relate the action of an analytic of algebraic
supergroup $G$ on a supermanifold or superscheme $M$, with the action of 
the corresponding SHCP $(G_0, \fg)$ on $M$. We first introduce a 
(well known) definition.

\begin{definition} \label{action-def}
A morphism 
\begin{equation*}
    a \colon G\times M \lra M
\end{equation*}
is called an \emph{action} of $G$ on $M$ if it satisfies
\begin{subequations}
\begin{gather}
    \label{eq:action_mult}
    a \circ ( \mu \times \id_M ) = a \circ ( \id_G \times  a ) \\
    a \circ \langle \hat{e} , \id_M \rangle = \id_M
\end{gather}
\end{subequations}
\end{definition}
\noindent
In the functor of points notation, this is the same as:
\\
1. $1 \cdot x=x$, $\forall x \in M(T)$, $1$ the unit in $G(T)$, \\
2. $(g_1g_2) \cdot x=g_1 \cdot (g_2 \cdot x)$, $\forall x \in M(T)$,
$\forall g_1, g_2 \in G(T)$.
\\
where $T$ is a supermanifold (resp. a superscheme) and
$M(T)=\Hom(T, M)$ are the $T$-points of $M$.

If an action $a$ of $G$ on $M$ is given, then we say that $G$ \textit{acts}
on $M$.

\medskip

If $(G_0, \fg)$ is an analytic SHCP, we can define what it means for
 $(G_0, \fg)$ to act on a supermanifold.

\begin{definition} \label{actionanalshcp}
We say that the SHCP $(G_0,\fg)$ acts on a supermanifold $M$, if there are
\begin{enumerate}
    \item an action
    \begin{equation} \label{eq:interesting}
        \ua \colon G_0 \times M \lra M
    \end{equation}
    $\ua \coloneqq a \circ (j_{|G|\lra G} \times
    \id_M)$ of the reduced Lie group $G_0$ on the supermanifold
    $M$;
    \item a representation
        \begin{equation} \label{eq:infinitesimalaction}
        \begin{aligned}
            \rho_a \colon \fg &\lra \VEC(M)^\op \\
            X &\MAPSTO \left( X \otimes \id_{\cO(M)} \right) a^*
        \end{aligned}
    \end{equation}
    of the super Lie algebra $\fg$ of $G$ on the opposite of the Lie
    superalgebra of vector fields over $M$.
\end{enumerate}
and the two morphisms satisfy the following compatibility relations
\begin{subequations} \label{eqs:compatibilityforactions}
\begin{align}
    \restr{\rho_a}{\fg_0}(X) &= \left( X \otimes \id_{\cO(M)} \right) \ua^* &
    &\forall X \in \fg_0 \\
    \rho_a(g.Y) &= \big(\ua^{g^{-1}}\big)^* \rho_a(Y) {(\ua^g)}^* &
    &\forall g \in |G|, \, Y \in \fg
\end{align}
\end{subequations}
where  $a^g \colon M \to M$, $a^g  := a \circ 
\langle \hat{g} , \id_M \rangle$
\end{definition}

The next proposition tells us that actions of a SHCP correspond
bijectively to actions of the
corresponding analytic supergroup.

\begin{proposition} \label{prop:action_SLG_SHCP}
Let $G$ be an analytic supergroup acting on a supermanifold $M$.
Then there is an action of the SHCP $(G_0,\Lie(G))$ on $M$.
Conversely, given an action of  the SHCP $(G_0,\fg)$ on $M$,
there is a unique action $a_\rho \colon G\times M \lra M$ of
the analytic supergroup $G$ corresponding
to the given SHCP on $M$ whose reduced and infinitesimal 
actions are the given ones. If $U$ is an open subset of $M$, we have
\begin{equation} \label{eq:reconstructingactions}
    \begin{aligned}
        a_\rho^* \colon \cO_M(U) &\lra \HOM_{\cU(\fg_0)} \big( \cU(\fg), (\cinfty_{G_0} \hotimes \cO_M)(\topored{a}^{-1}(U)) \big) \\
        f &\MAPSTO \Big[ X \MAPSTO (-1)^{\p{X}} \big( \id_{\cinfty(G_0)} \otimes \rho(X) \big) \ua^*(f) \Big]
    \end{aligned}
\end{equation}
\end{proposition}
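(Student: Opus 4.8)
The plan is to treat the two directions as mirror images of the construction already carried out for supergroups versus SHCP in Theorem \ref{eqcat}, using that equivalence to transport the problem from $G$ to the concrete model $\overline{G} = (G_0, \HOM_{\cU(\fg_0)}(\cU(\fg), \cO_{G_0}))$. For the \emph{forward} direction, given an action $a\colon G\times M\to M$ I would simply set $\ua := a\circ(j_{|G|\to G}\times\id_M)$ and $\rho_a(X) := (X\otimes\id_{\cO(M)})a^*$; the first compatibility relation in \eqref{eqs:compatibilityforactions} is then immediate from the definition of $\ua$ and the fact that on $\cU(\fg_0)$ the distribution action agrees with differentiating along $G_0$, while the second compatibility relation is obtained by applying $(X\otimes\id)$ to the associativity axiom \eqref{eq:action_mult} after pre- and post-composing with the point inclusions $\hat g$, exactly as the analogous identity $\Ad(\psi_0(g))\circ\rho^\psi = \rho^\psi\circ\Ad(g)$ was handled for SHCP morphisms. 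That $\rho_a$ lands in $\VEC(M)^{\op}$ (i.e.\ is an \emph{anti}-homomorphism of super Lie algebras) is the usual left-invariant-vector-field computation using the coassociativity of $a^*$.

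For the \emph{converse}, I would first reduce to the model supergroup: by Theorem \ref{anal-eqcat} it suffices to produce an action $a_\rho\colon \overline{G}\times M\to M$, and the candidate is forced by formula \eqref{eq:reconstructingactions}, namely
\[
a_\rho^*(f) = \Big[ X \mapsto (-1)^{\p{X}}\big(\id_{\cinfty(G_0)}\otimes\rho(X)\big)\ua^*(f) \Big].
\]
The work then splits into three checks, each parallel to a step already performed in the proof of Proposition \ref{analsgrps} and Theorem \ref{anal-eqcat}. First, $a_\rho^*(f)$ is a well-defined element of $\HOM_{\cU(\fg_0)}(\cU(\fg), \cinfty_{G_0}\hotimes\cO_M)$, i.e.\ it is $\cU(\fg_0)$-linear in $X$: this uses the first compatibility relation to recognize $\rho(Z)$ for $Z\in\fg_0$ as differentiation along $G_0$, and then the argument is the PBW/$\fg_0$-linearity computation verbatim from Proposition \ref{analsgrps}. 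Second, $a_\rho^*$ is a homomorphism of superalgebras — this follows because $\gamma$ (equivalently $\Delta_{\cU(\fg)}$) is a supercoalgebra map and $\rho$ is an algebra anti-homomorphism into $\VEC(M)^{\op}$, so the product on the Hom-space (defined via $\Delta_{\cU(\fg)}$, cf.\ Theorem \ref{gamma}) is respected; this is the same formal manipulation as $\phi_U(f_1\cdot f_2) = \phi_U(f_1)\phi_U(f_2)$ in the proof of Theorem \ref{gamma}. Third, the action axioms \eqref{eq:action_mult}–\eqref{eq:action_mult} for $a_\rho$: the unit axiom is immediate from $e^*$ evaluating $X$ at $1$ and $\ua$ being an honest action; the associativity axiom is checked on $f\in\cO_M(U)$ and on elements $X\otimes Y\in\cU(\fg)\otimes\cU(\fg)$ (sufficient by PBW), where on one side the $\Ad$-twist $h^{-1}.X$ appears through $\mu_{\overline G}^*$ and on the other through the second compatibility relation $\rho_a(g.Y) = (\ua^{g^{-1}})^*\rho_a(Y)(\ua^g)^*$ — the two sides match after using that $\ua$ is an action of $G_0$, precisely as in the verification $\eta\circ\mu_{\overline G} = \mu_G\circ(\eta\times\eta)$.

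Finally, \emph{uniqueness}: any action $a'$ of $G$ on $M$ with the prescribed reduced action $\ua$ and infinitesimal action $\rho$ must satisfy $(a')^*(f)(X) = (\id\otimes\rho(X))\ua^*(f)$ up to the sign $(-1)^{\p X}$, because $(a')^*(f)$, viewed after the global-splitting isomorphism \eqref{eq:global_split} as a $\cU(\fg_0)$-linear map out of $\cU(\fg)$, is determined by its values on a PBW basis, and those values are computed by applying the left-invariant operators $D_X = (1\otimes X)(a')^*$ — which by hypothesis are $\rho(X)$ composed with $\ua^*$. Hence $(a')^* = a_\rho^*$, and finally one checks the two prescribed data are recovered from $a_\rho$ (the reduced action by restriction along $j_{|G|\to G}$, the infinitesimal one by applying $(X\otimes\id)$), closing the bijection.

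\textbf{Main obstacle.} The one genuinely delicate point — everything else being a transcription of earlier computations — is verifying the associativity axiom for $a_\rho$, because it is exactly there that the compatibility relation \eqref{eqs:compatibilityforactions}b between $\rho$ and the $G_0$-conjugation $\ua^g$ must be invoked to reproduce the $h^{-1}.X$ twist built into $\mu_{\overline G}^*$; getting the two occurrences of the $\Ad$-action (one algebraic, one differential-geometric) to line up, together with the completed tensor product $\cinfty_{G_0}\hotimes\cO_M$ bookkeeping, is where care is needed.
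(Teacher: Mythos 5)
Your plan follows the paper's own proof essentially step for step: the same candidate formula for $a_\rho^*$, the same three verifications ($\cU(\fg_0)$-linearity via the first compatibility relation, multiplicativity via $\Delta_{\cU(\fg)}$, and the associativity/unit axioms via the second compatibility relation and the $h^{-1}.X$ twist), and the same uniqueness argument evaluating $(1\otimes X)a^*$ at $1$ against $\ua^*$. Correct, and no genuinely different route from the paper.
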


\begin{proof}
Let us check that $a_\rho^*(f)$ is $\cU(\fg_0)$-linear. For all $X
\in \cU(\fg)$ and $Z \in \fg_0$ we have
\begin{align*}
    a_\rho^*(f)(ZX)
    &= (-1)^{\p{X}} \big( \id \otimes \rho(ZX) \big) \ua^*(f) \\
    &= (-1)^{\p{X}} \big( \id \otimes \rho(X) \big) (\id \otimes Z_e \otimes \id) (\id \otimes \ua^*) \ua^*(f) \\
    &= (-1)^{\p{X}} \big( \id \otimes \rho(X) \big) (\id \otimes Z_e \otimes \id) (\red{\mu}^* \otimes \id) \ua^*(f) \\
    &= \big( \red{D^{}_Z} \otimes \id \big) \big[ a_\rho^*(f)(X) \big]
\end{align*}
We now check that $a_\rho^*$ is a superalgebra morphism.
\begin{align*}
    \big[ a_\rho^*(f_1) \cdot a_\rho^*(f_2) \big](X)
    &= m_{\cinfty(G_0) \hotimes \cO(M)} \big[ a^*(f_1) \otimes a^*(f_2) \big] \Delta(X) \\
    &= (-1)^{\p{X}} m \Big[ \big(\id \otimes \rho(X_{(1)})\big) \ua^*(f_1) \otimes \big(\id \otimes \rho(X_{(2)})\big) \ua^*(f_2) \Big] \\
    &= (-1)^{\p{X}} \big( \id \otimes \rho(X) \big) \big( \ua^*(f_1) \cdot \ua^*(f_2) \big) \\
    &= a_\rho^*(f_1 \cdot f_2)(X)
\end{align*}
where $f_i \in \cO(M)$ and $X_{(1)} \otimes X_{(2)}$ denotes
$\Delta(X)$. Concerning the ``associative'' property, we have that,
for $X,Y \in \cU(\fg)$ and $g,h \in G_0$,
\begin{align*}
    \big[ (\mu^* \otimes \id) a_\rho^*(f) \big](X,Y)(g,h)
    &= \big[ a_\rho^*(f) \big] (h^{-1}.X Y)(gh) \\
    &= (-1)^{\p{X} + \p{Y} + \p{X}\p{Y}} \rho(Y) \rho(h^{-1}.X) {(\ua^{gh})}^*(f) \\
    &= (-1)^{\p{X} + \p{Y} + \p{X}\p{Y}} \rho(Y) {(\ua^h)}^* \rho(X) {(\ua^g)}^*(f) \\
    &= \big[ (\id \otimes a_\rho^*) a_\rho^*(f) \big](X,Y)(g,h)
\end{align*}
and, finally, $(\ev_e \otimes \id)a_\rho^*(f) = \rho(1) = f$.

{\em Uniqueness} can be proved as follows.
Let $a$ be an  action of $G$ on $M$ and let $(\ua,\rho_a)$ be as in
prop.\ \ref{prop:action_SLG_SHCP}. If $f \in \cO_M(U)$, then
\begin{equation*}
    a^*(f) \in( \HOM_{\cU(\fg_0)} \big( \cU(\fg), \cinfty_{G_0} \big) 
\hotimes \cO_M)(\topored{a}^{-1}(U)) \cong
\end{equation*}
\begin{equation*}
    \cong \HOM_{\cU(\fg_0)} \big( \cU(\fg), (\cinfty_{G_0} \hotimes \cO_M)(\topored{a}^{-1}(U)) \big)
\end{equation*}
hence, using eq.\ \eqref{eq:action_mult} and the fact that $\rho_a$
is an antihomomorphism, for all $X \in \cU(\fg)$
\begin{align*}
    a^*(f)(X)
    &= (-1)^{\p{X}} \big[ (D^{}_X \otimes \id) a^*(\phi) \big](1) \\
    &= (-1)^{\p{X}} \big( \id \otimes \rho_a(X) \big) \big( a^*(f)(1) \big) \\
    &= (-1)^{\p{X}} \big( \id \otimes \rho_a(X) \big) \ua^*(f)
\end{align*}
\end{proof}

Let us now assume $G$ is an affine algebraic supergroup over
a field of characteristic zero and
$(G_0, \fg)$ is the corresponding SHCP and furthermore assume
they are acting on a supervariety
$M$, the definition \ref{actionanalshcp} 
being the same, taking the morphisms
in the appropriate category.

We state the analogue of the proposition  \ref{prop:action_SLG_SHCP}
in the algebraic setting, its proof being essentially the same.

\begin{proposition} \label{prop:action_SHCP}
Let $G$ be an algebraic supergroup acting on a supervariety $M$
(not necessarily affine).
Then there is an action of the SHCP $(G_0,\Lie(G))$ on $M$.
Conversely, given an algebraic action of  the algebraic SHCP $(G_0,\fg)$ on $M$,
there is a unique action $a_\rho \colon G\times M \lra M$ of
the algebraic supergroup $G$ corresponding
to the given SHCP on $M$ whose reduced and infinitesimal 
actions are the given ones. If $U$ is an open subset of $M$, we have
\begin{equation} \label{eq:reconstructingactions}
    \begin{aligned}
        a_\rho^* \colon \cO_M(U) &\lra \HOM_{\cU(\fg_0)} 
\big( \cU(\fg), (\cO_{G_0} \otimes \cO_M)(\topored{a}^{-1}(U)) \big) \\
        f &\MAPSTO \Big[ X \MAPSTO (-1)^{\p{X}} \big( \id_{\cO(G_0)} 
\otimes \rho(X) \big) \ua^*(f) \Big]
    \end{aligned}
\end{equation}
\end{proposition}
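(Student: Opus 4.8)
The plan is to reduce the algebraic statement, Proposition \ref{prop:action_SHCP}, to the analytic one, Proposition \ref{prop:action_SLG_SHCP}, at the level of formal manipulations, exactly as Proposition \ref{algHopfstructure} was reduced to Proposition \ref{analsgrps}. Concretely, the "exponential" notation introduced before Proposition \ref{algHopfstructure} gives a meaning to the operators $\wt{D}^{}_Z$ and to all the expressions $(\id\otimes\rho(X))\ua^*$ in the algebraic category: they are the functor-of-points incarnations of the action of $\cU(\fg_0)$ on $\cO_{G_0}$ by left invariant differential operators, together with the infinitesimal action $\rho$ of $\fg$ on $M$. Once this dictionary is in place, the computations verifying that $a_\rho^*(f)$ is $\cU(\fg_0)$-linear, that $a_\rho^*$ is a superalgebra morphism, that the "associativity" axiom \eqref{eq:action_mult} holds, and that $(\ev_e\otimes\id)a_\rho^*=\id$, are line-by-line the same as those displayed in the proof of Proposition \ref{prop:action_SLG_SHCP}, using only the relation $\Delta_{\cU(\fg)}$, the cocycle identity for the $G_0$-action on $\fg$, and the antihomomorphism property of $\rho$.

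First I would note that the target of $a_\rho^*$ is well defined: by Theorem \ref{gamma}, $\cU(\fg)$ is finitely generated and free as a left $\cU(\fg_0)$-module, so
\[
\HOM_{\cU(\fg_0)}\big(\cU(\fg),\cO_{G_0}\big)\otimes\cO_M
\;\cong\;\HOM_{\cU(\fg_0)}\big(\cU(\fg),\cO_{G_0}\otimes\cO_M\big),
\]
and more generally the same holds sheaf-theoretically over $|a|^{-1}(U)$. This is the algebraic analogue of the tensor-hom identity used in the uniqueness part of Proposition \ref{prop:action_SLG_SHCP}; it is exactly what allows us to regard $a^*(f)$, for a genuine action $a$, as an element of $\HOM_{\cU(\fg_0)}(\cU(\fg),(\cO_{G_0}\otimes\cO_M)(|a|^{-1}(U)))$. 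Then, writing $D^{}_X=(1\otimes X)\mu^*$ for the left invariant differential operators on $G$ and using axiom \eqref{eq:action_mult} together with the fact that $\rho_a$ is an antihomomorphism, one gets $a^*(f)(X)=(-1)^{|X|}(\id\otimes\rho_a(X))\ua^*(f)$, which is precisely the formula \eqref{eq:reconstructingactions}; this proves uniqueness. Conversely, starting from $(\ua,\rho)$ and defining $a_\rho^*$ by that formula, the four checks above show $a_\rho^*$ is a morphism of superalgebras satisfying the action axioms, and that it restricts to the given $\ua$ and differentiates to the given $\rho$.

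The main point requiring care — and the step I expect to be the real obstacle — is showing that $a_\rho^*(f)$ actually lands in $\cO_M(U)$ pulled back correctly, i.e.\ that the superalgebra $\HOM_{\cU(\fg_0)}(\cU(\fg),(\cO_{G_0}\otimes\cO_M)(|a|^{-1}(U)))$ is the structure sheaf of the superscheme $G\times M$ over $|a|^{-1}(U)$, so that $a_\rho^*$ defines a morphism of superschemes and not merely a collection of algebra maps. In the analytic case this is immediate from the globally split description of $\cO_G$ and the inverse function theorem; in the algebraic case it follows instead from the identification $\cO_G=\HOM_{\cU(\fg_0)}(\cU(\fg),\cO_{G_0})$ being the structure sheaf of $\uspec\cO(G)$ (the proposition preceding Theorem \ref{alg-eqcat}) together with $\cO_{G\times M}=\cO_G\hat\otimes\cO_M$ and the tensor-hom isomorphism above. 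With that in hand the remaining verifications are formal, and I would simply remark, as the authors do, that they are identical to those in the proof of Proposition \ref{prop:action_SLG_SHCP} once the exponential terminology is interpreted algebraically per the discussion preceding Proposition \ref{algHopfstructure}.

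\begin{proof}
The target of $a_\rho^*$ makes sense: by Theorem \ref{gamma}, $\cU(\fg)$ is a finitely generated free left $\cU(\fg_0)$-module, whence the canonical map
\[
\HOM_{\cU(\fg_0)}\big(\cU(\fg),\cO_{G_0}\big)\otimes\cO_M
\;\lra\;\HOM_{\cU(\fg_0)}\big(\cU(\fg),\cO_{G_0}\otimes\cO_M\big)
\]
is an isomorphism, and likewise over $|a|^{-1}(U)$; combined with $\cO_{G\times M}=\cO_{G}\hat\otimes\cO_M$ and the identification $\cO_G\cong\HOM_{\cU(\fg_0)}(\cU(\fg),\cO_{G_0})$ of the proposition preceding Theorem \ref{alg-eqcat}, this shows that a morphism $a\colon G\times M\to M$ of superschemes is the same as a sheaf morphism
\[
a^*\colon\cO_M(U)\lra\HOM_{\cU(\fg_0)}\big(\cU(\fg),(\cO_{G_0}\otimes\cO_M)(|a|^{-1}(U))\big).
\]

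Given such an action $a$, set $\ua=a\circ(j\times\id_M)$ and $\rho_a(X)=(X\otimes\id)a^*$; that this is an action of $G_0$ and an antirepresentation of $\fg$ satisfying \eqref{eqs:compatibilityforactions} is checked exactly as in the analytic case. Writing $D^{}_X=(1\otimes X)\mu^*$ and using \eqref{eq:action_mult}, for $X\in\cU(\fg)$ one computes
\begin{align*}
a^*(f)(X)
&= (-1)^{\p{X}}\big[(D^{}_X\otimes\id)a^*(f)\big](1)\\
&= (-1)^{\p{X}}\big(\id\otimes\rho_a(X)\big)\big(a^*(f)(1)\big)\\
&= (-1)^{\p{X}}\big(\id\otimes\rho_a(X)\big)\ua^*(f),
\end{align*}
which is \eqref{eq:reconstructingactions}; hence $a$ is uniquely determined by $(\ua,\rho_a)$.

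Conversely, given $(\ua,\rho)$ as in Definition \ref{actionanalshcp} (in the algebraic category), define $a_\rho^*$ by \eqref{eq:reconstructingactions}. Interpreting the exponential notation and the operators $\wt{D}^{}_Z$ algebraically as in the discussion preceding Proposition \ref{algHopfstructure}, the verifications that $a_\rho^*(f)$ is $\cU(\fg_0)$-linear, that $a_\rho^*$ is a morphism of superalgebras, that $(\mu^*\otimes\id)a_\rho^* = (\id\otimes a_\rho^*)a_\rho^*$, and that $(\ev_e\otimes\id)a_\rho^* = \id$, are verbatim the computations displayed in the proof of Proposition \ref{prop:action_SLG_SHCP}, relying only on the coproduct of $\cU(\fg)$, the cocycle identity for the $G_0$-action on $\fg$, and the antihomomorphism property of $\rho$. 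By the identifications above, $a_\rho^*$ is the comorphism of a morphism $a_\rho\colon G\times M\to M$ of superschemes, and by construction its reduced action is $\ua$ and its infinitesimal action is $\rho$. Uniqueness is the previous paragraph. The correspondence on morphisms is immediate and left to the reader.
\end{proof}
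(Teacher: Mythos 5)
Your proposal is correct and follows essentially the same route as the paper, which proves the algebraic case simply by observing that, once the exponential notation and left-invariant operators are interpreted algebraically (as before Proposition \ref{algHopfstructure}), the computations of Proposition \ref{prop:action_SLG_SHCP} carry over verbatim. Your extra care in justifying the target of $a_\rho^*$ via finite freeness of $\cU(\fg)$ over $\cU(\fg_0)$ and the identification $\cO_G\cong\HOM_{\cU(\fg_0)}(\cU(\fg),\cO_{G_0})$ as the structure sheaf of $\uspec\cO(G)$ is a welcome elaboration of a point the paper leaves implicit, not a different method.
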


\end{document}